\theoremstyle{plain}
\newtheorem{theorem}{Theorem}[section]
\newtheorem{lemma}[theorem]{Lemma}
\newtheorem{proposition}[theorem]{Proposition}
\newtheorem{conjecture}[theorem]{Conjecture}
\theoremstyle{definition}
\newtheorem{definition}[theorem]{Definition}
\newtheorem{example}[theorem]{Example}
\theoremstyle{remark}
\newcommand{\card}[1]{\lvert #1 \rvert}
\newcommand{\cS}{\mathcal{S}}
\newcommand{\cM}{\mathcal{M}}
\newcommand{\cD}{\mathcal{D}}
\newcommand{\cF}{\mathcal{F}}
\newcommand{\RR}{\mathbb{R}}
\newcommand{\NN}{\mathbb{N}}
\newcommand{\eps}{\varepsilon}
\newcommand{\bin}{\text{Bin}}
\newcommand{\dg}{\delta_{\textnormal{greedy}}}
\newcommand{\tg}{T_{\textnormal{greedy}}}
\newcommand{\av}{\mathcal{AV}}
\renewcommand{\ss}{\mathcal{SS}}
\newcommand{\rss}{\mathcal{RSS}}
\title{Shuffle squares and reverse shuffle squares}
\author{Xiaoyu He}
\address{Department of Mathematics, Princeton University, Princeton, NJ 08540, USA}
\email{xiaoyuh@princeton.edu}
\author{Emily Huang}
\address{Department of Mathematics, Stanford University, Stanford, CA 94305, USA}
\email{ehuang2@stanford.edu}
\author{Ihyun Nam}
\address{Department of Mathematics, Stanford University, Stanford, CA 94305, USA}
\email{ihyun@stanford.edu}
\author{Rishubh Thaper}
\address{Department of Mathematics, Stanford University, Stanford, CA 94305, USA}
\email{rthaper@stanford.edu}
\date{September 2021}
\begin{document}

\counterwithin{equation}{section}

\maketitle

\begin{abstract}
    Let $\ss_k(n)$ be the family of {\it shuffle squares} in $[k]^{2n}$, words that can be partitioned into two disjoint identical subsequences. Let $\rss_k(n)$ be the family of {\it reverse shuffle squares} in $[k]^{2n}$, words that can be partitioned into two disjoint subsequences which are reverses of each other. Henshall, Rampersad, and Shallit conjectured asymptotic formulas for the sizes of $\ss_k(n)$ and $\rss_k(n)$ based on numerical evidence. We prove that
    \[
    \card{\ss_k(n)}=\dfrac{1}{n+1}\dbinom{2n}{n}k^n-\dbinom{2n-1}{n+1}k^{n-1}+O_n(k^{n-2}),
    \]
    confirming their conjecture for $\card{\ss_k(n)}$. We also prove a similar asymptotic formula for reverse shuffle squares that disproves their conjecture for $\card{\rss_k(n)}$. As these asymptotic formulas are vacuously true when the alphabet size is small, we study the binary case separately and prove that $\card{\ss_2(n)} \ge \binom{2n}{n}$.
    
\end{abstract}

\pagenumbering{arabic}

\section{Introduction}

A {\it word of length $n$} over the alphabet $[k]\coloneqq \{0,1,\dots,k-1\}$ is an element of $[k]^n$. A {\it subsequence} of $w$ is any word obtained by deleting zero or more symbols from $w$. A word is a \textit{shuffle square} if it can be partitioned into two disjoint identical subsequences, and a \textit{reverse shuffle square} if it can be partitioned into two disjoint subsequences that are reverses of each other. 

Shuffle squares and reverse shuffle squares were first defined by Henshall, Rampersad, and Shallit \cite{henshall2012} in 2012 in the context of formal language theory, and have arisen naturally in complexity theory and coding theory since then. Independently, Rizzi and Vialette \cite{rizzi2013} and Buss and Soltys \cite{buss2014} showed that for some fixed $k$, deciding whether a word in $[k]^{2n}$ is a shuffle square is NP-complete. Recently, Bulteau and Viallette \cite{bulteau2020} improved this result by showing that even over a binary alphabet, deciding whether a word is a shuffle square is NP-complete. 

The problem of finding the largest shuffle square contained in any binary word of length $n$, called the ``twins in words'' problem, was studied by Axenovich, Person, and Puzynina \cite{apptwins2012}. They developed a regularity lemma for words analogous to Szemer\'edi's graph regularity lemma and used it to show that every binary word of length $n$ contains a shuffle square of length $n-o(n)$. The twins problem can be viewed as asking for the longest common subsequence between a word and itself, and so is closely related to the problem of finding longest common subsequences between distinct words. This problem has been studied extensively in both combinatorics and theoretical computer science \cite{bukh2018, bukh2014, cormen2001, hirschberg1975} and has applications in computational biology through insertions and deletions of nitrogenous base pairs \cite{xia2007}. It also arises naturally in the study of error-correcting codes for deletion channels, which were introduced by Levenshtein \cite{levenshtein1966} in 1966. Recently, the Axenovich-Person-Puzynina regularity lemma introduced in the study of the ``twins in words'' problem, was adapted by Guruswami, He, and Li \cite{guruswami2021} to prove the first nontrivial upper bound on the zero-rate threshold of the binary deletion channel.

Despite all of this work, the basic question of enumerating shuffle squares remained unsolved. Henshall, Rampersad, and Shallit \cite{henshall2012} conjectured asymptotic formulas involving Catalan numbers for $\card{\ss_k(n)}$ and $\card{\rss_k(n)}$ based on numerical evidence, and the enumeration of shuffle squares was called one of the most interesting problems in the area by Bulteau and Viallette \cite{bulteau2020}.

In this paper, we prove asymptotic formulas for $\card{\ss_k(n)}$ and $\card{\rss_k(n)}$. Our first result confirms the conjecture of Henshall, Rampersad, and Shallit for shuffle squares.

\begin{theorem}
\label{theorem:ss}
For $k\ge 2$ and $n\ge 1$, \[\card{\ss_k(n)}=\dfrac{1}{n+1}\dbinom{2n}{n}k^n-\dbinom{2n-1}{n+1}k^{n-1}+O_n(k^{n-2}).\]
\end{theorem}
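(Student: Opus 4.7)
The natural approach is to enumerate shuffle squares via Dyck-path interleavings. For $v\in[k]^n$ and a Dyck path $\sigma\in\{L,R\}^{2n}$ (a sequence with $n$ $L$'s and $n$ $R$'s such that every prefix has at least as many $L$'s as $R$'s), define $w(v,\sigma)\in[k]^{2n}$ by placing the letters of $v$, in order, at the $L$-positions of $\sigma$ and again at the $R$-positions. Then $w(v,\sigma)\in\ss_k(n)$ by construction, and the total number of such pairs is $C_n k^n=\frac{1}{n+1}\binom{2n}{n}k^n$, matching the leading term of the stated formula.

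The first main step is a structural lemma: every shuffle square admits at least one Dyck decomposition, so the map $(v,\sigma)\mapsto w(v,\sigma)$ surjects onto $\ss_k(n)$. I would prove this by an exchange argument---given any decomposition $(P_1,P_2)$ of $w$ whose induced $L/R$ string has a Dyck violation at its earliest prefix, use the equality $w|_{P_1}=w|_{P_2}$ to locally swap a pair of equal-symbol positions between $P_1$ and $P_2$, yielding a new valid decomposition with strictly fewer Dyck violations. Iterating produces a Dyck decomposition.

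Setting $D(w)=\card{\{(v,\sigma):w(v,\sigma)=w\}}$, the surjection and the pair count give the identity
\[
\card{\ss_k(n)}\;=\;C_n k^n\;-\;E_k,\qquad E_k:=\sum_{w\in\ss_k(n)}\bigl(D(w)-1\bigr),
\]
so the theorem reduces to showing $E_k=\binom{2n-1}{n+1}k^{n-1}+O_n(k^{n-2})$. For $v$ with pairwise distinct letters, a direct first-letter argument shows that the $C_n$ Dyck pairs $(v,\sigma)$ produce $C_n$ distinct shuffle squares, none of which is obtained from any other $v'$; such $v$ therefore contribute $0$ to $E_k$. The dominant excess comes from $v$'s with exactly one coincidence $v_i=v_j$ (where $i<j$), of which there are $\binom{n}{2}k(k-1)\cdots(k-n+2)=\Theta(k^{n-1})$. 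A case analysis on the coincidence positions $(i,j)$ enumerates the Dyck-path collisions for such $v$; summing over all such $v$ and applying the identity $\binom{2n-1}{n+1}=\tfrac{n-1}{2}C_n$ yields the stated second-order coefficient. Words $v$ with two or more coincidences, of which there are $O_n(k^{n-2})$, contribute $O_n(k^{n-2})$ to $E_k$.

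The main obstacle will be the collision count in the second-order step: for each placement of a coincidence $(i,j)$ in a one-coincidence $v$, one must enumerate the pairs of distinct Dyck paths $(\sigma,\sigma')$ with $w(v,\sigma)=w(v,\sigma')$ and show the aggregate matches $\binom{2n-1}{n+1}$, likely via a Catalan/ballot-type identity. The structural lemma is also technically delicate: one must verify that an appropriate equal-symbol swap is always available at the earliest Dyck violation and that it strictly reduces a suitable monotone violation measure, so that the iteration terminates in a genuine Dyck decomposition.
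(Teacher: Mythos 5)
Your proposal follows essentially the same route as the paper: decompose a shuffle square into a Dyck path together with a word $v\in[k]^n$, observe that this map surjects (the paper's Lemma~\ref{lemma:index-tableau}, due to Bukh--Zhou), and then account for overcounting to extract the leading term $C_n k^n$ and the second-order coefficient $\binom{2n-1}{n+1}$ via a Catalan-type count of colliding decompositions. Two small points of comparison: your excess quantity $E_k$ is a reformulation of the first two terms of the paper's inclusion--exclusion over non-nesting perfect matchings (and the paper's observation that three distinct non-nesting matchings on $[2n]$ have edge union with $\le n-2$ components is exactly the cleanup step you need to dismiss the $D(w)\ge 3$ contribution). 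Also, your proposed proof of the structural lemma by iteratively repairing Dyck violations is more delicate than necessary --- given any decomposition with index sets $I'=\{i'_1<\cdots<i'_n\}$, $J'=\{j'_1<\cdots<j'_n\}$, simply setting $i_r=\min(i'_r,j'_r)$ and $j_r=\max(i'_r,j'_r)$ directly produces a standard tableau (equivalently a Dyck decomposition) with $s_I=s_J$, so no termination argument is needed.
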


The coefficient of the $k^n$ term on the right-hand side can be recognized as the $n$th Catalan number $C_n$, and the second-order term turns out to enumerate the total number of valleys across all Dyck paths of semi-length $n$. Indeed, the proof of Theorem \ref{theorem:ss} employs inclusion-exclusion and several Catalan bijections, mapping shuffle squares to standard $2 \times n$ Young tableaux, non-nesting perfect matchings, and Dyck paths.

Next, we prove an analogous formula for reverse shuffle squares, which disagrees with the conjectured formula of Henshall, Rampersad, and Shallit in the second-order term.

\begin{theorem}
\label{theorem:rss}
For $k\ge 2$, we have $\card{\rss_k(1)} = k$ and \[\card{\rss_k(n)}=\dfrac{1}{n+1}\dbinom{2n}{n}k^n-\dfrac{2n^3+9n^2-35n+30}{n^3+3n^2+2n}\dbinom{2n-2}{n-1}k^{n-1}+O_n(k^{n-2})\]
for $n\ge 2$.
\end{theorem}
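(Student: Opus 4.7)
The plan mirrors that of Theorem~\ref{theorem:ss}. To each factorization $\{1,\dots,2n\}=U\sqcup V$ of a reverse shuffle square $w$ with $U=\{p_1<\cdots<p_n\}$, $V=\{q_1<\cdots<q_n\}$, and $w(p_i)=w(q_{n+1-i})$ for all $i$, I associate the perfect matching $M(U,V)=\{\{p_i,q_{n+1-i}\}:i=1,\dots,n\}$. A rank-counting argument, noting that $|U\cap\{1,\dots,p_i\}|=i$ and hence $|V\cap\{1,\dots,p_i\}|=p_i-i$, shows that every arc of $M(U,V)$ has exactly one endpoint in the first half $\{1,\dots,n\}$ and one in the second half $\{n+1,\dots,2n\}$. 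So $M(U,V)$ is a bipartite matching, encoded by a permutation $\sigma\in S_n$ defined by the rule that for each $i\in\{1,\dots,n\}$, the arc of $M(U,V)$ containing position $i$ has its other endpoint equal to $n+\sigma(i)$.

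The first lemma of the proof is that the realizable permutations are exactly the $123$-avoiding permutations of $\{1,\dots,n\}$, so they are counted by $C_n=\frac{1}{n+1}\binom{2n}{n}$. Indeed, if positions $a<a'$ both lie in the first half and both in $U$, with $p$-ranks $t<t'$, then their matched partners satisfy $q_{n+1-t}>q_{n+1-t'}$, giving $\sigma(a)>\sigma(a')$; the identical argument applies when both lie in $V$. Hence $\sigma$ is a merge of the two decreasing sequences obtained by restricting it to the first-half positions in $U$ and to those in $V$, so $\sigma$'s longest increasing subsequence has length at most $2$. The converse uses the standard fact that every $123$-avoiding permutation admits a partition into two decreasing subsequences; either such partition determines a compatible $(U,V)$. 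Each pair (canonical matching, coloring of its $n$ arcs by letters of $[k]$) produces a reverse shuffle square, giving the leading count $C_n k^n$.

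To pass from this count to $|\rss_k(n)|$, I apply inclusion-exclusion over subsets of canonical matchings simultaneously consistent with a word:
\[
|\rss_k(n)|=\sum_{j\ge 1}(-1)^{j+1}\sum_{\{M_1,\dots,M_j\}}k^{c(M_1,\dots,M_j)},
\]
where $c(M_1,\dots,M_j)$ is the number of connected components of the multigraph $M_1\cup\cdots\cup M_j$ (each component forces its vertices to carry a common letter). The bipartite structure of canonical matchings shows that at order $k^{n-1}$ only the $j=2$ term contributes, and within it only pairs $\{M,M'\}$ whose symmetric difference is a single $4$-cycle (a \emph{simple swap}). Consequently the $k^{n-1}$ coefficient equals $-1$ times the number of simple swaps among the $C_n$ canonical matchings.

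The main combinatorial task, and the main obstacle, is enumerating these simple swaps. Under the standard Catalan bijection between $123$-avoiding permutations and Dyck paths (via the canonical decomposition into two decreasing subsequences), a simple swap corresponds to a local modification at one of several identifiable patterns, depending on whether the swapped arcs lie within the same decreasing run or across both, and on their positions in their runs. Summing the generating function for each type over Dyck paths of semi-length $n$ produces, after simplification, the claimed coefficient $\frac{2n^3+9n^2-35n+30}{n(n+1)(n+2)}\binom{2n-2}{n-1}$. This is where the proof diverges sharply from the shuffle-square argument: instead of the single valley statistic giving $\binom{2n-1}{n+1}$, we face several local swap types whose contributions combine to produce the awkward cubic in the numerator. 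The case $n=1$ is handled directly: a length-$2$ word is a reverse shuffle square iff its two letters agree, yielding $|\rss_k(1)|=k$.
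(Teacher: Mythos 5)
Your high-level outline mirrors the paper's: encode factorizations of reverse shuffle squares as bipartite perfect matchings on $[2n]$ indexed by $123$-avoiding permutations (giving the leading term $C_n k^n$), apply inclusion-exclusion, and reduce the $k^{n-1}$ coefficient to $-B_n$, where $B_n$ is the number of unordered pairs of $123$-avoiding permutations in $S_n$ differing by a single transposition.

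The genuine gap is that you stop exactly where the hard work begins: computing $B_n$. The sentence ``summing the generating function for each type over Dyck paths of semi-length $n$ produces, after simplification, the claimed coefficient'' is an assertion of the result, not a proof. In the paper this enumeration occupies essentially all of \cref{sec:rss}: the transposition pairs $(\pi,(ij))$ are classified into four ``types'' according to where the remaining plot points fall relative to the axis-parallel lines through $(i,\pi(i))$ and $(j,\pi(j))$; a lemma shows every valid pair has at least one type; the size of each type and of every intersection of types is computed by a chain of bijections, one of which invokes the Catalan-convolution identity $\binom{2n-2}{n-2}=\sum_{a+b+c+d=n-2}\binom{a+c}{a}C_{a+b+1,a+1}C_{c+d+1,c+1}$ proved separately by double-counting UD paths (\cref{prop:catconv}); and only then does inclusion-exclusion over the four types give $B_n=2\binom{2n-2}{n-2}+2C_{n+1}-8C_n+5C_{n-1}$, which equals the stated coefficient. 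Without this, your proposal contains no derivation of the polynomial $2n^3+9n^2-35n+30$; it has the skeleton but not the argument that produces the theorem's actual second-order term.

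Two smaller issues. Your ``rank-counting'' for bipartiteness does not follow as written: $|U\cap\{1,\dots,p_i\}|=i$ alone does not show each arc crosses the midpoint. Instead set $a=|U\cap[n]|$, $b=|V\cap[n]|$, note $a+b=n$, and check $p_i\le n \Leftrightarrow i\le a \Leftrightarrow n+1-i>b \Leftrightarrow q_{n+1-i}>n$. Also, the claim that only $j=2$ contributes at order $k^{n-1}$ needs a short lemma rather than the phrase ``bipartite structure'': if three distinct precedence-free matchings on $[2n]$ jointly had $n-1$ components, the unique $4$-vertex component $\{a<b<c<d\}$ would have to realize all three perfect matchings of itself across the three $m(\pi_i)$, one of which contains both $(a,b)$ and $(c,d)$ and so fails to be precedence-free.
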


The coefficient of $k^n$ is again the $n$-th Catalan number. The proof is similar to that of \cref{theorem:ss}, except that here the correct Catalan interpretation maps reverse shuffle squares to $123$-avoiding permutations of length $n$. The coefficient of $k^{n-1}$ above counts the number of unordered pairs of $123$-avoiding permutations of length $n$ differing by a transposition. We remark that in both \cref{theorem:ss} and \cref{theorem:rss}, we expect our methods can be extended to determine any finite number of terms in the asymptotic formulas.

The main drawback of Theorems \ref{theorem:ss} and \ref{theorem:rss} is that they say nothing about $|\ss_k(n)|$ and $|\rss_k(n)|$ when the alphabet size $k$ is small. We conclude with two results for shuffle squares over a binary alphabet. Henshall, Rampersad, and Shallit showed that binary reverse shuffle squares are exactly {\it abelian squares}, those binary words where the first half is a permutation of the second half, thus proving that $|\rss_2(n)|=\binom{2n}{n}$. We show there are more binary shuffle squares than binary reverse shuffle squares for $n\ge 3$.

\begin{theorem}
\label{theorem:bss}
For $n\ge 1$, $\card{\ss_2(n)} \ge \dbinom{2n}{n}$, with strict inequality if $n\ge 3$.
\end{theorem}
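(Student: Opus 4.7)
Since the paper notes that binary reverse shuffle squares coincide with abelian squares (via Henshall--Rampersad--Shallit), $\card{\rss_2(n)} = \binom{2n}{n}$, and the theorem reduces to showing $\card{\ss_2(n)} \ge \card{\rss_2(n)}$ with strict inequality when $n \ge 3$. My plan is to construct an explicit injection $\phi \colon \rss_2(n) \to \ss_2(n)$, and then for $n \ge 3$ exhibit an element of $\ss_2(n) \setminus \phi(\rss_2(n))$.

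For the injection, I would write each abelian square as $w = uv$ with $u, v \in \{0,1\}^n$ and $v$ a permutation of $u$, and associate with $w$ its natural \emph{parallel matching}: pair the $i$-th occurrence of each letter in $u$ with the $i$-th occurrence of the same letter in $v$. This matching is automatically monochromatic; when it is additionally non-nesting, $w$ is itself a shuffle square via this matching and we set $\phi(w) = w$. Otherwise, I would apply a canonical local transposition at an innermost nested pair to unnest it, iterating until the matching is non-nesting. The modifications are local enough to be invertible from their output, so $\phi$ is injective.

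For the strict inequality when $n \ge 3$, I would exhibit $w^\star = 0^{2n-2}1^2$. This is a shuffle square via the non-nesting monochromatic matching $\{(1,2),(3,4),\ldots,(2n-3, 2n-2),(2n-1,2n)\}$, but it is not an abelian square, since its first half contains $n$ zeros and its second half only $n-2$. Hence $w^\star$ is not the identity image of any shuffle-square abelian, and a short analysis shows that the canonical unnesting operation cannot produce the fully sorted structure of $w^\star$ starting from a non-shuffle abelian square, so $w^\star$ lies genuinely outside $\phi(\rss_2(n))$.

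The principal obstacle is defining the canonical unnesting operation rigorously so that its output is always a valid shuffle square and the resulting map $\phi$ is provably injective. A naive ``swap two adjacent opposite-colored letters'' rule tends to produce conflicts with existing shuffle squares or leave the matching nested elsewhere; a careful choice — such as swapping the two boundary letters at an innermost nested pair and terminating by induction on the number of nestings — should suffice. If this direct construction proves too intricate, an alternative plan is to bypass the abelian-square injection entirely and directly produce $\binom{2n}{n}$ distinct shuffle squares by parameterizing them via pairs $(p,q) \in \{0,1\}^n \times \{0,1\}^n$ of matched composition, using an explicit combinatorial construction indexed by the positions where $p$ and $q$ disagree.
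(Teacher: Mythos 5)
Your approach---building an explicit injection $\phi\colon\rss_2(n)\to\ss_2(n)$ via ``unnesting'' the parallel matching of an abelian square, and then exhibiting a shuffle square outside the image---is genuinely different from the paper's, but the crucial unnesting step is not defined, and the natural candidates fail. Consider $n=2$ and $w = 0110 \in \rss_2(2)$: its parallel matching has edges $(1,4)$ and $(2,3)$, which nest, so $\phi$ must modify $w$ (indeed, $0110$ is not a shuffle square). Swapping the boundary letters of the nested pair in any sensible sense produces $1010$, $0101$, or $1001$. The first two are abelian squares whose parallel matchings are already non-nesting, so they are fixed by $\phi$ and you get a collision; the third, $1001$, is not a shuffle square at all. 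So the construction as sketched is neither well-defined nor injective. Moreover, even granting a valid $\phi$, arguing that $w^\star = 0^{2n-2}1^2$ lies outside the image requires a concrete description of $\phi$'s action on the non-shuffle abelian squares, which you also leave unspecified (``a short analysis shows\ldots'' carries essentially all the weight).

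The paper avoids these difficulties entirely by running a greedy matching algorithm on each $s\in\{0,1\}^{2n}$: it pairs the leftmost unused bit $i_r$ with the leftmost later matching bit $j_r>j_{r-1}$, producing a tableau $(I,J)$, and sets $\dg(s)=|I|-|J|$. The key technical result (Lemma~\ref{lemma:greedy-deficit}) counts words by $\dg$, showing that exactly $\binom{2n}{n}$ words have $\dg(s)=0$, via a fiber-size argument: both the set $\{s:\dg(s)=0\}$ and the set of UD paths ending at $(2n,0)$ are mapped onto Dyck paths with matching fibers of size $2^{x(p)}$. Every word with $\dg(s)=0$ is a shuffle square, giving the lower bound, and the word $1^{n-1}01^{n-1}0$ witnesses strict inequality for $n\ge 3$ since it is a shuffle square with $\dg>0$. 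If you want to salvage your plan, your ``alternative'' of parameterizing shuffle squares directly is closer in spirit to what the paper does, but it too needs a concrete construction; I would recommend analyzing a deterministic matching procedure as the paper does rather than attempting to repair abelian squares one nesting at a time.
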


The proof of Theorem \ref{theorem:bss} uses a greedy algorithm to identify $\binom{2n}{n}$ binary shuffle squares of length $2n$. In particular, the subset of $\ss_2(n)$ found by \cref{theorem:bss} can be detected by a linear-time algorithm, whereas identifying general binary shuffle squares is NP-complete by \cite{bulteau2020}.

Our final result concerns the twins problem. If $s\in \{0,1\}^n$, let $\delta(s)$ be the smallest number of bits that can be removed from $s$ to obtain a shuffle square. Axenovich, Person, and Puzynina \cite{apptwins2012} proved that $\delta(s) = O(n(\log \log n/\log n)^{1/4})$ for all $s\in \{0,1\}^n$ using the regularity method, and we ask a closely related question: how big is $\delta(s)$ for {\it typical} words $s$?

It is not difficult to show that a random binary word $s$ is w.h.p.\footnote{The phrase ``with high probability," abbreviated w.h.p., signifies that an event occurs with probability $1-o(1)$.}\ $\eps$-regular with $\eps = n^{-1/3 + o(1)}$ and thus satisfies $\delta(s) \le n^{2/3 + o(1)}$, using the ideas of \cite{apptwins2012}. Using the greedy algorithm, we prove the following improvement.

\begin{theorem}
\label{theorem:greedytwins}
If $n\ge 1$ and $s$ is a uniform random element of $\{0,1\}^n$, and $h:\NN\rightarrow\RR$ is any function tending to infinity, then w.h.p. we have
\[
\delta(s) \le h(n)\sqrt{n}.
\]
\end{theorem}

Based on numerical evidence (see \cite{mathoverflow} and OEIS A191755 \cite{oeisA191755}), we believe that a much stronger result is true.

\begin{conjecture}
\label{conjecture:half} If $n\ge 1$ and $s$ is a uniform random element of $\{0,1\}^{2n}$ with an even number of ones, then $s$ is w.h.p. a shuffle square.
\end{conjecture}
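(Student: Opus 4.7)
The natural attack is the second moment method. Let $X(s)$ be the number of ordered pairs $(A, B)$ with $A \sqcup B = [2n]$, $|A|=|B|=n$, and the subsequence of $s$ indexed by $A$ equal to that indexed by $B$. A direct double count gives
\[
\sum_s X(s) \;=\; \binom{2n}{n}\, 2^n,
\]
summed over the $2^{2n-1}$ balanced-parity strings (the only $s$ with $X(s)>0$), since each ordered partition $(A,B)$ together with a word $w\in\{0,1\}^n$ pins down a unique $s$ via $s|_A = s|_B = w$. Hence $\EE[X] = \binom{2n}{n}\cdot 2^{1-n} \sim 2^{n+1}/\sqrt{\pi n}$, which is exponentially large. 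If one can establish $\EE[X^2] = (1+o(1))\EE[X]^2$, the Paley--Zygmund inequality immediately gives $\Pr[X\ge 1] = 1-o(1)$, proving the conjecture.

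\noindent\textbf{Expansion into matching cycles.} Each decomposition $(A,B)$ determines a non-nesting perfect matching $M(A,B)$ on $[2n]$: pair the $k$-th smallest element of $A$ with the $k$-th smallest of $B$ (the Catalan bijection mentioned after \cref{theorem:ss}). The constraint $s|_A = s|_B$ is equivalent to requiring $s$ to be constant on each edge of $M$. For two such matchings $M_1,M_2$, the union $M_1\cup M_2$ is $2$-regular and decomposes into even cycles, and the number of strings $s$ satisfying both constraints simultaneously is exactly $2^{c(M_1,M_2)}$, where $c$ counts these cycles. Therefore
\[
\EE[X^2] \;=\; \frac{1}{2^{2n-1}}\sum_{(A_1,B_1),\,(A_2,B_2)} 2^{c(M(A_1),\,M(A_2))},
\]
and a short calculation shows $\EE[X^2]/\EE[X]^2 = \tfrac{1}{2}\cdot\overline{2^c}$, where $\overline{2^c}$ is the average of $2^{c(M_1,M_2)}$ over pairs of decompositions. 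The whole task reduces to showing $\overline{2^c}\le 2+o(1)$.

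\noindent\textbf{Main obstacle.} For two \emph{uniformly random} perfect matchings of $[2n]$, the cycle structure of the union asymptotically resembles the cycle structure of a random permutation, so the average of $2^c$ grows polynomially in $n$; this would inflate $\EE[X^2]$ beyond $\EE[X]^2$ by a polynomial factor and the vanilla second moment would fail. The technical heart of the proof is therefore to exploit the non-nesting restriction to substantially suppress short cycles, especially $2$-cycles (i.e., pairs of decompositions sharing an edge), which are the dominant source of blow-up. A plausible route is to truncate $X$ to a subfamily $X_0$ consisting of ``spread-out'' decompositions, for example those in which $|A \cap [1,n]|$ concentrates near $n/2$ and the increments $i_{k+1}-i_k$ are bounded, whose pairwise overlap statistics are structurally limited, and then prove a second moment bound for $X_0$ using the Catalan structure. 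Establishing the required short-cycle suppression against the exponential weight $2^c$ is where I expect the central difficulty to lie; as a fallback, one could combine the restricted second moment with a separate combinatorial argument showing that strings admitting many highly correlated decompositions form a negligible subset of the sample space.
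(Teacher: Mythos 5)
This statement is an open conjecture in the paper; the authors offer only the numerical evidence cited (the StackExchange discussion and OEIS A191755), not a proof. So there is no proof in the paper to compare against, and your proposal — which you have written honestly as a plan with the key lemma left unproved — does not close the gap.

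That said, let me flag a concrete problem with the plan even as a plan. The target you set, $\overline{2^c}\le 2+o(1)$, is provably unattainable, so the vanilla second moment cannot give the whp conclusion and the truncation step is not an optional refinement but strictly necessary. Indeed, $\overline{2^c}=\binom{2n}{n}^{-2}\sum_s X(s)^2$, and the two constant words alone contribute $X(0^{2n})^2+X(1^{2n})^2=2\binom{2n}{n}^2$. By Cauchy--Schwarz over the remaining at most $2^{2n-1}$ balanced-parity words,
\[
\sum_{s\ne 0^{2n},1^{2n}} X(s)^2 \;\ge\; \frac{\bigl(\binom{2n}{n}(2^n-2)\bigr)^2}{2^{2n-1}} \;=\; 2\binom{2n}{n}^2\,(1-o(1)),
\]
so $\overline{2^c}\ge 4-o(1)$ and $\EE[X^2]/\EE[X]^2\ge 2-o(1)$; Paley--Zygmund then gives at best $\Pr[X>0]\ge \tfrac12+o(1)$, not $1-o(1)$. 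In fact the inflation is much worse than a constant factor: words with $O(1)$ ones (or close to $0^n1^n$) each have $X(s)$ within a polynomial factor of $\binom{2n}{n}$, and there are polynomially many of them, so $\overline{2^c}$ grows at least polynomially in $n$ and the untruncated Paley--Zygmund bound degrades to $1/\mathrm{poly}(n)$. Your instinct that non-nesting might suppress shared edges also cuts the other way: non-nesting matchings concentrate their mass on short edges (for example $\Pr[(i,i{+}1)\in M]$ is of constant order for many $i$), which if anything increases the expected number of $2$-cycles in $M_1\cup M_2$ relative to the uniform perfect-matching model. The correct moral is that a truncation to a carefully chosen, well-spread family of decompositions is essential, and designing one whose pairwise $2^c$-weighted overlaps are controlled is exactly the open difficulty you named in your final paragraph; as written the proposal does not resolve it, and neither does the paper.
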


If true, \cref{conjecture:half} would simultaneously strengthen both \cref{theorem:bss} and \cref{theorem:greedytwins} by showing that $|\ss_2(n)| \ge (1/2 - o(1)) \cdot 4^n$, and that $\delta(s) \le 3$ holds w.h.p. for a random word $s\in\{0,1\}^n$.

The remainder of this paper is organized as follows. In \cref{sec:prelim}, we collect our notations, definitions, and preliminary results, especially standard results about various objects enumerated by Catalan numbers. We relate shuffle squares to standard $2\times n$ Young tableaux and Dyck paths, reverse shuffle squares to $123$-avoiding permutations, and prove two Catalan convolution identities that we need later. In \cref{sec:ss}, we prove \cref{theorem:ss} using inclusion-exclusion. In \cref{sec:rss}, we prove \cref{theorem:rss} using inclusion-exclusion and counting $123$-avoiding permutations of various types. Finally, both Theorems \ref{theorem:greedytwins} and \ref{theorem:bss} are proved in Section \ref{sec:greedy} using a greedy algorithm.

We use the standard asymptotic notation $f=O(g)$ to indicate that there is some constant $C>0$ for which $|f| < Cg$, and add subscripts to $O$ to indicate variables the implicit constant is allowed to depend on.

\vspace{3mm}
\noindent {\bf Acknowledgments.} The authors would like to thank Noga Alon, Ryan Alweiss, and Jacob Fox for stimulating conversations, Matija Buci\'c and Jeffrey Shallit for helpful comments on this paper, and Ben Gunby for a key observation in the proof of \cref{theorem:greedytwins}. We are grateful to the Stanford Undergraduate Research Institute in Mathematics program for hosting this project. The first author's research was supported by NSF GRFP Grant DGE-1656518 and by NSF MSPRF Grant DMS-2103154. 

\section{Preliminaries}\label{sec:prelim}
In this section, we formalize the notions described in the introduction and list the relevant combinatorial identities for the proofs of Theorems \ref{theorem:ss}, \ref{theorem:rss}, \ref{theorem:bss}, and \ref{theorem:greedytwins}.

We also note that an alphabet of size $k$ is conventionally taken to be the set $\{0,1,\dots,k-1\}$, which we abbreviate by $[k]$. For a word $S=s_1s_2 \cdots s_n \in [k]^n$, its \textit{reverse} $S^R$ is the word $s_ns_{n-1} \cdots s_1$.

\begin{definition}
For $k,n\ge 1$, a word $S=s_1s_2 \cdots s_{2n} \in [k]^{2n}$ is a \textit{shuffle square} if and only if there exists index sets $I=\{i_1,i_2,\dots,i_n\}$ and $J=\{j_1,j_2,\dots,j_n\}$ with $i_1 < \cdots < i_n$, $j_1 < \cdots < j_n$, and $I \cap J=\emptyset$ such that $s_{i_r}=s_{j_r}$ for all $1 \le r \le n$.
\end{definition}

\begin{definition}
For $k,n\ge 1$, a word $S=s_1s_2 \cdots s_{2n} \in [k]^{2n}$ is a \textit{reverse shuffle square} if and only if there exists index sets $I=\{i_1,i_2,\dots,i_n\}$ and $J=\{j_1,j_2,\dots,j_n\}$ with $i_1 < \cdots < i_n$, $j_1 < \cdots < j_n$, and $I \cap J=\emptyset$ such that $s_{i_r}=s_{j_{n+1-r}}$ for all $1 \le r \le n$.
\end{definition}

The Catalan numbers are central to the proofs in this paper.
\begin{definition}[Catalan numbers]
The \textit{Catalan numbers} $\{C_n\}$ are defined by $C_0=1$, and 
\[
C_n=\sum_{k=0}^{n-1}C_kC_{n-1-k}
\]
for all $n\ge 1$. 
\end{definition}

It is well-known that $C_n=\frac{1}{n+1}\binom{2n}{n}$ for all nonnegative integers $n$. The proofs in this paper invoke Dyck paths, $123$-avoiding permutations, and standard $2\times n$ Young tableaux, all of which are counted by the Catalan numbers. We define these objects below.

\begin{definition}[UD paths and Dyck paths]\label{def:dyck}
A \textit{UD path of semilength $n$} is a path in the plane starting from $(0,0)$ which consists of $2n$ steps, where each step is either an ``up-step'' of size $(1,1)$ or a ``down-step'' of size $(1,-1)$.
A \textit{Dyck path of semilength $n$} is a UD path of semilength $n$ ending at $(2n,0)$ that never goes below the $x$-axis. A \textit{strict Dyck path} is a Dyck path which does not intersect the $x$-axis internally.
\end{definition}

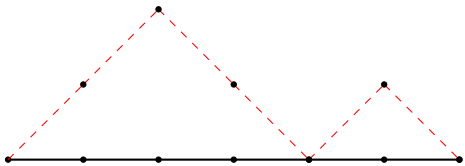
\begin{figure}[H]
    \centering
    \begin{tikzpicture}[scale=0.5]
\draw[thick] (0,0) -- (12,0);

\foreach \x in {0,2,4,6,8,10,12}
    \filldraw[black] (\x,0) circle (2pt);

\draw[dashed, color=red] (0,0) -- (2,2);
\draw[dashed, color=red] (2,2) -- (4,4);
\draw[dashed, color=red] (4,4) -- (6,2);
\draw[dashed, color=red] (6,2) -- (8,0);
\draw[dashed, color=red] (8,0) -- (10,2);
\draw[dashed, color=red] (10,2) -- (12,0);

\filldraw[black] (2,2) circle (2pt);
\filldraw[black] (4,4) circle (2pt);
\filldraw[black] (6,2) circle (2pt);
\filldraw[black] (8,0) circle (2pt);
\filldraw[black] (10,2) circle (2pt);
\filldraw[black] (12,0) circle (2pt);
\end{tikzpicture}
    \caption{A Dyck path of semilength 3}
    \label{fig:dyck path 3}
\end{figure}

UD paths are naturally in bijection with binary words of length $2n$ over the alphabet $\{U, D\}$. Under this bijection, Dyck paths correspond to those words for which every prefix contains at least as many $U$'s as $D$'s. The number of Dyck paths of semilength $n$ is $C_n$, and the number of strict Dyck paths of semilength $n$ is $C_{n-1}$.

\begin{definition}[$123$-avoiding permutation]
\label{def:perm}
Let $\cS_n$ be the set of permutations on $[n]$. A permutation $\pi \in \cS_n$ is called \textit{$123$-avoiding} if there do not exist $i_1<i_2<i_3$ such that $\pi(i_1)<\pi(i_2)<\pi(i_3)$. The family of $123$-avoiding permutations in $\cS_n$ is denoted by $\av_{n}(123)$.
\end{definition}

\begin{figure}[H]
    \centering
    \begin{tikzpicture}
\begin{axis}[
    xlabel={$i$},
    ylabel={$\pi(i)$},
    xmin=1, xmax=8,
    ymin=1, ymax=8,
    xtick={1,2,3,4,5,6,7,8},
    ytick={1,2,3,4,5,6,7,8},
]

\addplot[
    color=red,
    mark=o,
    only marks
    ]
    coordinates {
    (1,5)(2,4)(3,8)(4,2)(5,1)(6,7)(7,6)(8,3)
    };
\end{axis}
\end{tikzpicture}
    \caption{The permutation $\pi = 54821763$ is 123-avoiding.}
\end{figure}
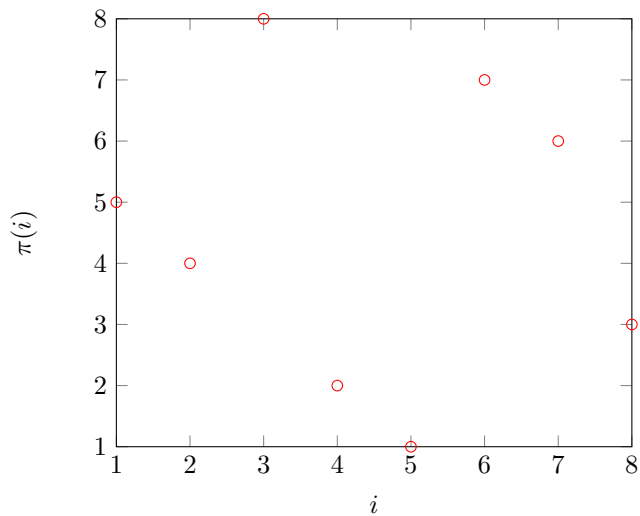

A permutation $\pi$ is $123$-avoiding if and only if $\pi$ can be partitioned into two decreasing subsequences, and the number of $123$-avoiding permutations on $[n]$ is $C_n$.

\begin{definition}[Young diagrams and Young tableaux]
\label{def:tableaux}
A {\it Young diagram} is a finite collection of boxes arranged in left-justified rows, with row lengths in non-increasing order. A {\it standard Young tableau} is obtained from a Young diagram with $n$ boxes by filling in the boxes with the elements of $[n]$, with the requirement that each row and column is increasing.

In this paper, we will work exclusively with standard Young tableaux with two rows (where the second row may be empty) and an even number $2n$ of boxes. We refer to these simply as ``tableaux of semilength $n$.'' For such a tableau we use the notation $(I,J)$ where $I$ and $J$ are the first and second rows (respectively) of the tableau. We say that a tableau of semilength $n$ is {\it rectangular} if it has dimensions $2\times n$.

\begin{figure}[H]
    \centering
    \begin{tikzpicture}[scale=0.5]
\draw[black, thick] (0,0) rectangle (6, 3);
\draw[black, thick] (0, 1.5) -- (6, 1.5);
\draw[black, thick] (2, 0) -- (2, 3);
\draw[black, thick] (4, 0) -- (4, 3);
\node[text width = 1cm] at (1.75, 0.75) {3};
\node[text width = 1cm] at (1.75, 2.25) {1};
\node[text width = 1cm] at (3.75, 0.75) {4};
\node[text width = 1cm] at (3.75, 2.25) {2};
\node[text width = 1cm] at (5.75, 0.75) {6};
\node[text width = 1cm] at (5.75, 2.25) {5};
\end{tikzpicture}
    \caption{A tableau $(I,J) = ((1,2,5), (3,4,6))$ of semilength 3.}
\end{figure}
\end{definition}

The number of rectangular tableaux of semilength $n$ is also $C_n$.

We now state and prove the relevant combinatorial identities on these objects.

A \textit{valley} in a Dyck path is a down-step followed by an up-step. We will require the enumeration of valleys across all Dyck paths of semilength $n$ for our proof of Theorem \ref{theorem:ss}. The enumeration itself is certainly not new (see \cite{deutsch1999} and OEIS A002054 \cite{oeisA002054}), but we include a short proof for completeness.

\begin{proposition}
\label{prop:valleys}
The number of valleys across all Dyck paths of semilength $n$ is $\binom{2n-1}{n+1}$.
\end{proposition}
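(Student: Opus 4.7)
The plan is to establish a bijection between pairs $(P,v)$, where $P$ is a Dyck path of semilength $n$ and $v$ is a valley of $P$, and pairs $(P',w)$, where $P'$ is a Dyck path of semilength $n-1$ and $w$ is a vertex of $P'$ of strictly positive height. Given $(P,v)$, suppose the valley $v$ sits at height $h$, so the corresponding $D$ step descends from $h+1$ to $h$ and the subsequent $U$ step ascends from $h$ back to $h+1$. Deleting both steps yields a path of semilength $n-1$ that still stays weakly above the $x$-axis (we removed a canceling pair), and the two endpoints of the removed steps, both at height $h+1$, merge into a single distinguished vertex $w$ of height $h+1\ge 1$. The inverse is immediate: given $(P',w)$ with $w$ at height $h+1\ge 1$, inserting $DU$ at $w$ (descending to $h\ge 0$ and returning) produces a Dyck path of semilength $n$ with a marked valley.

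Once the bijection is set up, counting the right-hand side is direct. A Dyck path of semilength $n-1$ has $2n-1$ vertices, so the total number of (path, vertex) pairs is $(2n-1)C_{n-1}$; from this, I would subtract the pairs whose marked vertex has height $0$. Marking a height-zero vertex of $P'$ decomposes $P'$ uniquely into an ordered concatenation of two Dyck paths of semilengths summing to $n-1$, so by the defining Catalan recurrence the number of such pairs is $\sum_{k=0}^{n-1} C_k C_{n-1-k} = C_n$. The total count of valleys is therefore $(2n-1)C_{n-1} - C_n$.

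Finally, I would verify the closed form $(2n-1)C_{n-1} - C_n = \binom{2n-1}{n+1}$ by routine manipulation. One has $(2n-1)C_{n-1} = \binom{2n-1}{n-1}$ directly, while the identity $\binom{2n}{n} = 2\binom{2n-1}{n-1}$ lets us rewrite $C_n = \frac{2}{n+1}\binom{2n-1}{n-1}$; combined with $\binom{2n-1}{n+1}/\binom{2n-1}{n-1} = \frac{n-1}{n+1}$, the difference collapses to $\binom{2n-1}{n+1}$. I do not anticipate a serious obstacle; the only delicate point is confirming that the bijection is well-posed at the boundary, which is automatic because the two endpoints of any Dyck path lie at height $0$ and so are correctly excluded from the count of admissible marked vertices on the right side.
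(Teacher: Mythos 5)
Your proof is correct, and it takes a genuinely different route from the paper. The paper derives a recursion for the valley count $V_n$ by decomposing Dyck paths at their first return to the $x$-axis, converts the recursion to a functional equation for the generating function, and then extracts coefficients (the ``Snake Oil'' method from Wilf). You instead give a direct bijection: deleting the $DU$ pair at a marked valley of a semilength-$n$ Dyck path produces a semilength-$(n-1)$ Dyck path with a distinguished vertex at strictly positive height, and this is clearly reversible since inserting $DU$ at a height-$\ge 1$ vertex never dips below the axis. The count then becomes elementary: there are $(2n-1)C_{n-1}$ total (path, vertex) pairs, and the height-zero vertices are counted by the Catalan recurrence $\sum_k C_k C_{n-1-k} = C_n$ since marking such a vertex splits the path into an ordered pair of Dyck subpaths, giving $(2n-1)C_{n-1} - C_n = \binom{2n-1}{n+1}$. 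Your arithmetic checks out: $(2n-1)C_{n-1} = \binom{2n-1}{n-1}$ and $C_n = \frac{2}{n+1}\binom{2n-1}{n-1}$, so the difference is $\binom{2n-1}{n-1}\cdot\frac{n-1}{n+1} = \binom{2n-1}{n+1}$. Your bijective argument is arguably cleaner and more illuminating than the generating-function route, and it avoids the formal power series manipulation entirely; the tradeoff is that the paper's generating-function framework is set up to be extended mechanically to further coefficients of the expansion, should one want higher-order terms, whereas the bijective approach would require a fresh construction for each new statistic.
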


\begin{proof}
For $n \ge 0$, let $V_n$ be the total number of valleys across all Dyck paths of semilength $n$. We will derive a recursive formula for $V_n$ that can be solved explicitly via generating functions.

For $1 \le k \le n$, let $\cD_{n,k}$ be the set of Dyck paths of semilength $n$ that return to the $x$-axis for the first time at the point $(2k,0)$. We use $\cD_{n}$ to denote Dyck paths whose semilength is $n$. Furthermore, let $V_{n,k}$ be the number of valleys across paths in $\cD_{n,k}$. Each path $p \in \cD_{n,k}$ is of the form $UaDb$, where $a$ is a Dyck path of semilength $k-1$ and $b$ is a Dyck path of semilength $n-k$. Three kinds of valleys appear across these $p$: the valleys in $a\in \cD_{k-1}$ which are each counted $C_{n-k}$ times, the valleys in $b\in \cD_{n-k}$ which are counted $C_{k-1}$ times, and the valley at the point $(2k,0)$, assuming $k<n$, counted $|\cD_{n,k}| = C_{k-1}C_{n-k}$ times.

Thus, we have
\begin{align*}
V_n &=\sum_{k=1}^{n}\left(V_{k-1}C_{n-k}+V_{n-k}C_{k-1}+C_{k-1}C_{n-k}\right) - C_{n-1} \\
&=2\sum_{k=0}^{n-1}V_kC_{n-1-k}+C_n-C_{n-1},
\end{align*}
where we used the Catalan recursion in the last line.

Let $v(x)=\sum_{n=0}^{\infty}V_nx^n$ be the generating function of the sequence $(V_n)_{n\geq0}$. Applying the ``Snake Oil" method described in \cite{wilf1994}, we multiply both sides of the above recursion by $x^n$ and sum over all $n \ge 1$ to obtain $$v(x)=2xv(x)c(x)+(1-x)c(x)-1,$$ where $c(x)$ is the generating function of the Catalan numbers.
Solving for $v(x)$ and plugging in the closed form of $c(x)$ gives $$v(x)=\dfrac{1}{\sqrt{1-4x}}\left(\dfrac{1-\sqrt{1-4x}}{2x}\right)(1-x)-\dfrac{1}{\sqrt{1-4x}}.$$ Expanding out these formal power series and comparing coefficients (see \cite[pages 53-54]{wilf1994}), we obtain
\[
    V_n =\dbinom{2n+1}{n}-\dbinom{2n-1}{n-1}-\dbinom{2n}{n} =\dbinom{2n-1}{n+1},
\]
as desired.
\end{proof}

We conclude this section with a combinatorial identity involving the {\it Catalan convolution} defined by
\[
C_{n,k}\coloneqq\frac{k}{2n-k}\binom{2n-k}{n},
\]
which enumerates (see \cite{connolly2014}) the number of 123-avoiding permutations
$\pi$ of length $n$ with $\pi(k)=n$. The numbers $C_{n,k}$ are called Catalan convolutions because they satisfy
\begin{equation}
C_{n,k}=\sum_{a_{1}+\cdots+a_{k}=n-k}\prod_{i=1}^{k}C_{a_{i}}.\label{eq:conv-def}
\end{equation}

The proof of Theorem \ref{theorem:rss} relies on the following identity involving Catalan convolutions.

\begin{proposition}
\label{prop:catconv}
For all $n \ge 2$,
\begin{equation}
\label{eq:interleaving-identity}
\binom{2n-2}{n-2}=\sum_{a+b+c+d=n-2}\binom{a+c}{a}C_{a+b+1,a+1}C_{c+d+1,c+1}.
\end{equation}
\end{proposition}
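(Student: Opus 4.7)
The plan is to prove the identity via generating functions. Let $c(x) = \sum_{n \ge 0} C_n x^n = \frac{1-\sqrt{1-4x}}{2x}$ denote the Catalan generating function. The convolution formula \eqref{eq:conv-def} says precisely that $C_{n,k} = [x^{n-k}]\, c(x)^k$, so in particular
\[
C_{a+b+1,\,a+1} = [x^b]\, c(x)^{a+1}, \qquad C_{c+d+1,\,c+1} = [x^d]\, c(x)^{c+1}.
\]
Substituting both into the right-hand side and performing the Cauchy product over $b + d = n-2-a-c$ with $a, c$ fixed converts the four-index sum into
\[
\mathrm{RHS} \;=\; \sum_{\substack{a,c \ge 0\\ a+c \le n-2}} \binom{a+c}{a}\, [x^{n-2-a-c}]\, c(x)^{a+c+2}.
\]

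Next I would group terms by $s = a+c$ and apply $\sum_{a=0}^{s}\binom{s}{a} = 2^s$, after which the sum becomes a truncated geometric series that can be extended to all $s$ without changing the coefficient of $x^{n-2}$:
\[
\mathrm{RHS} \;=\; \sum_{s=0}^{n-2} 2^s\, [x^{n-2-s}]\, c(x)^{s+2} \;=\; [x^{n-2}]\,\frac{c(x)^2}{1 - 2x\, c(x)}.
\]
Then I would simplify using two standard identities: $1 - 2x\, c(x) = \sqrt{1-4x}$ and $x\, c(x)^2 = c(x) - 1$. These collapse the generating function to
\[
[x^{n-2}]\,\frac{c(x)-1}{x\sqrt{1-4x}} \;=\; [x^{n-1}]\,\frac{c(x)}{\sqrt{1-4x}} - [x^{n-1}]\,\frac{1}{\sqrt{1-4x}}.
\]

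Finally, the second coefficient is $\binom{2n-2}{n-1}$, and writing $\frac{c(x)}{\sqrt{1-4x}} = \frac{1}{2x}\bigl(\frac{1}{\sqrt{1-4x}} - 1\bigr)$ shows that the first coefficient equals $\tfrac{1}{2}\binom{2n}{n}$ for $n \ge 1$. Thus $\mathrm{RHS} = \tfrac12\binom{2n}{n} - \binom{2n-2}{n-1}$, and a routine application of $\binom{2n}{n} = \tfrac{2(2n-1)}{n}\binom{2n-2}{n-1}$ reduces this to $\tfrac{n-1}{n}\binom{2n-2}{n-1} = \binom{2n-2}{n-2}$, as required. There is no real obstacle beyond careful bookkeeping; the main subtlety is recognizing that the weighted sum $\sum_s 2^s [x^{n-2-s}] c(x)^{s+2}$ telescopes into $c(x)^2 / (1 - 2xc(x))$, after which the closed form of $c(x)$ does all the work.
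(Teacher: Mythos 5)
Your generating-function proof is correct, and it takes a genuinely different route from the paper. You convert the Catalan convolutions to coefficient extractions via $C_{n,k} = [x^{n-k}]c(x)^k$, collapse the quadruple sum into a coefficient of $c(x)^2/(1-2xc(x))$, and then let the closed form of $c(x)$ do the algebra; each step (the Cauchy product over $b+d$, the row sum $\sum_a \binom{s}{a}=2^s$, the extension to an infinite geometric series justified by $\mathrm{ord}_x\bigl(x^s c(x)^{s+2}\bigr)=s$, the identities $1-2xc(x)=\sqrt{1-4x}$ and $xc(x)^2=c(x)-1$, and the final binomial cleanup) checks out. The paper instead gives a bijective double-count: it shows both sides enumerate UD paths from $(0,0)$ to $(2n,0)$ that start and end with an up-step, decomposing each such path into its maximal segments above and below the $x$-axis and interleaving two Dyck paths with prescribed numbers of internal returns. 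The bijective argument is more illuminating structurally -- it mirrors the way $P_{n,4}$ is constructed from interleaved decreasing runs later in the same section -- whereas your computation is shorter, more mechanical, and easier to verify, at the cost of not explaining what the sum is counting.
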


In both equations (\ref{eq:conv-def}) and (\ref{eq:interleaving-identity}), the sum is over all nonnegative compositions, i.e. choices of the summands from nonnegative integers.

We first note that $C_{n,k}$ is exactly
the number of Dyck paths from $(0,0)$ to $(2n,0)$ which touch the
$x$-axis exactly $k-1$ times internally; this is because such a
path breaks down into $k$ subpaths of lengths $a_{1}+1,\ldots,a_{k}+1$
which each stay on or above the line $y=1$ internally, hence (\ref{eq:conv-def}).

\begin{proof}[Proof of Proposition \ref{prop:catconv}]
The proof is by double-counting. We claim that both sides enumerate
the family $\cF$ of UD paths between $(0,0)$ and $(2n,0)$ that start
and end with an up-step. The
left side of (\ref{eq:interleaving-identity}) clearly enumerates such paths, because exactly $\text{\ensuremath{\binom{2n-2}{n-2}}}$
words in $\{U, D\}^{2n}$ with $n$
$U$'s and $n$ $D$'s both start and end with $U$.
As for the right side, take any $p\in \cF$ and suppose it intersects the
line $x=0$ a total of $t\ge1$ times internally. These $t$ points
break $p$ up into $t+1\ge2$ segments, each of which is either a
strict Dyck path or else the reflection of a strict Dyck path over the
$x$-axis. Since $p$ starts and ends with an up-step, it must contain at least one segment above the $x$-axis and at least one below it. Let there be $a+1$ segments above the $x$-axis and $b+1$ segments below the $x$-axis. Then, we map $p$ to the pair $(p_{+},p_{-})$
of Dyck paths where $p_{+}$ is obtained by concatenating all the segments above the $x$-axis together, and $p_{-}$ by concatenating all the
segments below the $x$-axis together, and reflect across the $x$-axis.

It is easy to check that this is a surjective map from $\cF$ to the
union 
\[
\bigcup_{a+b+c+d=n-2}D'_{a+b+1,a+1}\times D'_{c+d+1,c+1}, 
\]
where $\cD'_{n,k}$
is the family of Dyck paths of semi-length $n$ with exactly $k-1$
internal points, so that $|\cD'_{n,k}|=C_{n,k}$. Furthermore, the preimage
of any given pair $(p_{+},p_{-})$ has size exactly $\binom{a+c}{a}$, because this
is the number of ways to interleave the $a+1$ segments of $p_{+}$
and the $c+1$ segments of $p_{-}$, excepting the first segment of
$p_{+}$ which must go at the beginning of $p\in F$, and the last
segment of $p_{-}$ which must go at the end. This completes the proof
of (\ref{eq:interleaving-identity}). 
\end{proof}

\section{Shuffle Squares Over Large Alphabets}
\label{sec:ss}
\global\long\def\comp{\textnormal{comp}}%

In this section, we prove Theorem \ref{theorem:ss}, which states that $$\card{\ss_k(n)}=\dfrac{1}{n+1}\dbinom{2n}{n}k^n-\dbinom{2n-1}{n+1}k^{n-1}+O_n(k^{n-2}).$$

If $s\in[k]^{\ell}$ and $I\subseteq[\ell]$, write $s_{I}$
for the subsequence of $s$ indexed by $I$. The connection between tableaux and shuffle squares is the following simple lemma, which was proved by Bukh and Zhou \cite[Lemma 17]{bukh2013}. We include a short proof for completeness.
\begin{lemma}
\label{lemma:index-tableau}If $s\in[k]^{2n}$ is a shuffle square,
then there exists a rectangular tableau $(I,J)$ of semilength $n$ for which $s_I = s_J$. 

\end{lemma}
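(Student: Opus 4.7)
The plan is to start from any valid shuffle decomposition $(I,J)$ of $s$ and produce a tableau decomposition by ``sorting within each matched pair.'' Writing $I = \{i_1<\cdots<i_n\}$ and $J = \{j_1<\cdots<j_n\}$ so that $s_{i_r}=s_{j_r}$ for every $r$, I set $a_r = \min(i_r,j_r)$ and $b_r = \max(i_r,j_r)$, and define $I^* = \{a_1,\ldots,a_n\}$ and $J^* = \{b_1,\ldots,b_n\}$. Since $\{a_r,b_r\}=\{i_r,j_r\}$ for each $r$, the sets $I^*$ and $J^*$ still partition $[2n]$ into two disjoint $n$-element sets, and $a_r<b_r$ is automatic.

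The one thing to check is that the sequences $(a_r)$ and $(b_r)$ are already listed in increasing order, so that sorting $I^*$ and $J^*$ recovers them and gives $i^*_r=a_r$ and $j^*_r=b_r$. For $r<s$, the sort order in the original decomposition gives $i_r<i_s$ and $j_r<j_s$, hence
\[
a_r \le i_r < i_s \quad\text{and}\quad a_r \le j_r < j_s,
\]
which together force $a_r < \min(i_s,j_s) = a_s$; a symmetric argument (using $i_r<i_s\le b_s$ and $j_r<j_s\le b_s$) shows $b_r<b_s$.

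Once this order-preservation observation is in place, both requirements of a rectangular tableau decomposition follow immediately. The column condition $i^*_r<j^*_r$ becomes $a_r<b_r$, which is true by construction, so $(I^*,J^*)$ is a rectangular tableau of semilength $n$. The matching condition $s_{i^*_r} = s_{j^*_r}$ reduces to $s_{a_r} = s_{b_r}$, which is inherited from $s_{i_r} = s_{j_r}$ since $\{a_r,b_r\}=\{i_r,j_r\}$. There is essentially no obstacle here: the proof boils down to the elementary sort-order check above, together with the observation that swapping the roles of $i_r$ and $j_r$ within a single matched pair preserves the equality $s_{i_r}=s_{j_r}$.
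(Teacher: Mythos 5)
Your proof is correct and follows exactly the same approach as the paper's: take a shuffle decomposition and replace each matched pair $(i_r, j_r)$ by its sorted version $(\min, \max)$. The only difference is that you spell out the order-preservation check ($a_r < a_s$ and $b_r < b_s$ for $r<s$), which the paper dismisses with ``it is easy to see.''
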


\begin{proof}
By the definition of a shuffle square, there exists a partition $I' \sqcup J' = [2n]$ for which $s_{I'} = s_{J'}$. If $I' = \{i_1',\ldots, i_n'\}$ and $J' = \{j_1',\ldots, j_n'\}$, then take $i_r = \min (i_r', j_r')$ and $j_r = \max (i_r', j_r')$. It is easy to see that $(I,J) = ((i_1,\ldots, i_n), (j_1,\ldots ,j_n))$ is a tableau for which $s_I = s_J$, as desired.
\end{proof}

Let $\mathcal{T}_n$ denote the family
of all tableaux of semilength $n$. If $s\in[k]^{2n}$ is a shuffle square, we say that $(I,J) \in \mathcal{T}_n$ \emph{is
an index tableau for} $s$ if $s_I = s_J$. Lemma~\ref{lemma:index-tableau} implies the existence
of index tableaux for all shuffle squares. It follows that 
\begin{equation}\label{eq:overcount}
|\ss_{k}(n)|\le|\mathcal{T}_n|\cdot k^{n}=C_{n}\cdot k^{n},
\end{equation}
since this latter expression counts the number of ways to choose an index tableau $(I,J)$ and then the value of $s_{I}$, which together
determine $s$ completely. In order to prove \cref{theorem:ss}, it suffices to understand how much (\ref{eq:overcount}) overcounts.

\begin{proof}[Proof of Theorem \ref{theorem:ss}.]
First, we identify $\mathcal{T}_n$ with the family  $\cM_n$ of \emph{non-nesting perfect matchings} on vertex set $[2n]$. Here, a perfect
matching on $[2n]$ is \emph{non-nesting} if there do not exist
two edges $(i,j)$ and $(i',j')$ satisfying $i<i'<j'<j$. Thus, a
perfect matching $m$ on $[2n]$ is non-nesting if and only if
there exists $(I,J)\in \mathcal{T}_n$ such that the edges in $m$ are exactly the columns in $(I,J)$.

Let $\comp(G)$ denote the number of connected components of a graph
$G$. We claim that
\begin{equation}
|\ss_{k}(n)|=\sum_{m_{1}}k^{\comp(m_{1})}-\sum_{m_{1}\ne m_{2}}k^{\comp(m_{1}\cup m_{2})}+\cdots+(-1)^{r+1}\sum_{m_{1},\ldots,m_{r}}k^{\comp(m_{1}\cup\cdots\cup m_{r})}+\cdots\label{eq:PIE}
\end{equation}
by inclusion-exclusion, where the $r$-th sum is over all unordered choices of $r$ distinct
$m_{i}\in \mathcal{M}_n$. Formula (\ref{eq:PIE}) holds because the number
of shuffle squares $s$ which have the tableaux corresponding to $m_{1},\ldots,m_{r}$ simultaneously
as its index tableaux is $k^{\comp(m_{1}\cup\cdots\cup m_{r})}$,
since the value of $s$ on every vertex of a given connected component
must be the same. But the total number of terms in (\ref{eq:PIE})
is $O_{n}(1)$, because it is at most exponential in the size of $\mathcal{M}_n$, which only depends on $n$. Therefore, to prove Theorem~\ref{theorem:ss} it suffices to select only the terms from (\ref{eq:PIE}) with $\comp(m_{1}\cup\cdots\cup m_{r})\ge n-1$,
as all other terms summed together will be $O_{n}(k^{n-2})$.

It is not hard to see that the only terms in (\ref{eq:PIE}) with
$\comp(m_{1}\cup\cdots\cup m_{r})=n$ are exactly the terms of the
first summation $r=1$, which adds up to $C_{n}\cdot k^{n}$, the
desired leading term. As for $\comp(m_{1}\cup\cdots\cup m_{r})=n-1$,
one can check that the edge union of any three distinct non-nesting matchings on $[2n]$ has at most $n-2$ components, so the only second-order terms appear in the second sum $r=2$. Thus, we need to count
the number of pairs $m_{1}\ne m_{2}$ in $\mathcal{M}_n$ such that $\comp(m_{1}\cup m_{2})=n-1$.

Since $m_{1}$ and $m_{2}$ themselves each have $n$ components (i.e.
edges) of size $2$, for $\comp(m_{1}\cup m_{2})=n-1$ to hold, $m_{1}$
must share all but two of its edges with $m_{2}$, and the two remaining
edges must form a four-cycle with the two corresponding edges of $m_{2}$.
If the vertices of this four-cycle are $a<b<c<d$, then since $m_{1}$
and $m_{2}$ are both non-nesting they cannot contain the edges
$(a,d)$ and $(b,c)$. We may thus assume without loss of generality
that $(a,b),(c,d)\in m_{1}$ and $(a,c),(b,d)\in m_{2}$.

We claim that in order for $\comp(m_{1}\cup m_{2})=n-1$, the four
indices must satisfy the additional property $c=b+1$. If not, there
exists some $x$ between $b$ and $c$, and $x$ is matched to the
same vertex $y$ in both $m_{1}$ and $m_{2}$ since $m_{1}$ and
$m_{2}$ are identical outside $\{a,b,c,d\}$. If $y<a$ or $y>d$,
then $m_{1}$ is not non-nesting, while if $a<y<d$ then $m_{2}$
is not non-nesting. This is a contradiction in all cases, so no
such $x$ can exist and $c=b+1$.

We are now ready to prove that the pairs $\{m_{1},m_{2}\}$ satisfying
$\comp(m_{1}\cup m_{2})=n-1$ are in bijection with pairs $(p,v)$
of a Dyck path $p$ of semilength $n$ and a valley in $p$.
We first define a bijection between Dyck paths and tableaux by taking a path
$p$ to a tableau $(I,J)$ where $I$ consists of the indices of up-steps in $p$ in increasing order, and $J$ consists of the indices of the down-steps in increasing order. A valley $v$ is a down-step followed by an up-step, so under this map it goes to an index $b\in J$ such that $b+1 \in I$. Thus the pairs $(p,v)$ are in bijection with the pairs $((I,J), b)$ where $b\in J$ and $b+1\in I$.

Let $m_1$ be the non-nesting perfect matching corresponding
to $(I,J)$, and let $m_2$ be the matching corresponding to $(I\cup\{b\}\backslash\{b+1\},J\cup\{b+1\}\backslash\{b\})$ \footnote{Here we abuse notation by conflating sorted tuples $I$ and $J$ with their underlying sets; thus $I \cup \{b\} \setminus \{b+1\}$ is the tuple obtained by replacing the occurrence of $b$ in $I$ with $b+1$.}.
It is easy to see that $\comp(m_{1}\cup m_{2})=n-1$, and this gives
a bijection between pairs $((I,J),b)$ where $b\in J$ and $b+1 \in I$, and pairs $(m_{1},m_{2})$
with $\comp(m_{1}\cup m_{2})=n-1$.

By \cref{prop:valleys}, the
number of valleys across all Dyck paths of semilength $n$ is $\binom{2n-1}{n+1}$.
Thus, this is also the number of terms in (\ref{eq:PIE}) equal to
$-k^{n-1}$. We find that
\[
|\ss_{k}(n)|=C_{n}k^{n}-\binom{2n-1}{n+1}k^{n-1}+O_n(k^{n-2}),
\]
completing the proof.
\end{proof}

\section{Reverse Shuffle Squares}
\label{sec:rss}
In this section, we prove Theorem \ref{theorem:rss}, which states that $|\rss_{k}(n)|=C_n k^{n}-B_n k^{n-1}+O_{n}(k^{n-2}),$ where
\[
B_n = 2\binom{2n-2}{n-2}+2C_{n+1}-8C_{n}+5C_{n-1} = \dfrac{2n^3+9n^2-35n+30}{n^3+3n^2+2n}\dbinom{2n-2}{n-1},
\]
for $n\ge 2$. This time, instead of interpreting the Catalan numbers in terms of
Dyck paths, we will interpret them in terms of $123$-avoiding permutations.

Given a permutation $\pi\in S_{n}$
and a word $s\in[k]^{n}$, we write $\pi(s)$ for the word obtained
by shuffling the letters according to $\pi$, i.e. $\pi(s)_{i}\coloneqq s_{\pi(i)}$.
We also write $s^{R}$ for the reverse of $s$.
\begin{lemma}
Suppose $s\in[k]^{2n}$ and we split $s=s's''$ into two equal halves,
so that $s',s''$ are both words in $[k]^{n}$. Then, $s$ is a reverse
shuffle square if and only if $s''=\pi(s')$ for some $\pi \in \av_n(123)$.
\end{lemma}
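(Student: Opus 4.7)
My plan is to prove both directions of the biconditional by passing through the perfect matching of equal-symbol pairs induced by either the reverse shuffle square structure $(I, J)$ or the permutation $\pi$. The guiding observation is that a 123-avoiding permutation is exactly one that decomposes into two decreasing subsequences, and such a decomposition will turn out to be what encodes the splitting of positions between the two halves $s'$ and $s''$.

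For the forward direction, I start with a reverse shuffle square $s$ with index sets $(I, J)$, and consider the induced matching $M = \{\{i_r, j_{n+1-r}\} : 1 \le r \le n\}$ on $[2n]$. The crucial first step is to show that every pair in $M$ straddles the midpoint, i.e., has one endpoint in $[n]$ and the other in $[n+1, 2n]$. I plan to establish this by a counting argument: if a pair $\{a, b\}$ with $a < b \le n$ equaled $\{i_r, j_{n+1-r}\}$ in either orientation, then $r$ elements of $I$ together with at least $n+1-r$ elements of $J$ would all lie in $[1, b]$, yielding at least $n+1$ distinct elements in a range of size at most $n$, a contradiction. The case where both endpoints exceed $n$ is ruled out by a symmetric count on $[a, 2n]$.

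Given this straddling property, I define $\pi \in \cS_n$ by letting $\pi(i)$ be the unique partner of $n + i$ in $M$; the relation $s_{n+i} = s_{\pi(i)}$ then immediately yields $s'' = \pi(s')$. To verify that $\pi$ is 123-avoiding, I partition $[n]$ into $R = \{i : \pi(i) \in I\}$ and $T = \{i : \pi(i) \in J\}$, and use the pair structure $(i_r, j_{n+1-r})$ to show that both $\pi|_R$ and $\pi|_T$ are decreasing. This exhibits $\pi$ as a union of two decreasing subsequences, hence as 123-avoiding.

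For the converse direction, given a 123-avoiding $\pi$ with $s'' = \pi(s')$, I pick a decomposition $[n] = R \sqcup T$ into two decreasing subsequences of $\pi$ and define $I = \pi(R) \cup (T + n)$ and $J = \pi(T) \cup (R + n)$, which clearly partition $[2n]$ into two $n$-element sets. A direct sorting calculation, using the monotonicity of $\pi|_R$ and $\pi|_T$, will confirm that each matched pair $\{i_r, j_{n+1-r}\}$ has the form $\{\pi(i), n+i\}$ for some $i \in [n]$, so that $s_{i_r} = s_{j_{n+1-r}}$ follows at once from the hypothesis. I expect the main technical hurdle to be the straddling-property counting argument in the forward direction, since this is where the asymmetric reverse-matching condition $s_{i_r} = s_{j_{n+1-r}}$ genuinely enters; the remaining steps are bookkeeping with sorted orders.
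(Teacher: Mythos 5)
Your proof is correct, and the forward direction takes a genuinely different route from the paper's. The paper first reduces to the case where every letter appears exactly twice (via a homomorphism trick), invokes Henshall--Rampersad--Shallit's result that reverse shuffle squares are abelian squares to extract a unique permutation $\pi$, and then shows $\pi$ is $123$-avoiding by a short contradiction: a $123$-pattern in $\pi$ would force six positions of $s$ to spell $abcabc$, which is not a reverse shuffle square. You instead work directly with the index sets $(I,J)$: you observe that the reverse-matching $\{i_r, j_{n+1-r}\}$ must straddle the midpoint (a tidy counting argument that uses the monotone sorted orders of $I$ and $J$), read off $\pi$ as the straddling matching, and witness $123$-avoidance by the explicit decomposition of $[n]$ into $R=\{i:\pi(i)\in I\}$ and $T=\{i:\pi(i)\in J\}$, each of which is decreasing under $\pi$. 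Your approach is fully self-contained (no appeal to the abelian-square lemma) and makes the underlying bijection between reverse-shuffle decompositions and decreasing-decompositions of $\pi$ visible, whereas the paper's route is shorter but leans on a cited result and the distinct-letter reduction. The converse directions are essentially the same in spirit (both partition $\pi$ into two decreasing subsequences and build $I$ and $J$ from that); your explicit $I=\pi(R)\cup(T+n)$, $J=\pi(T)\cup(R+n)$ pairs $\pi(i)$ with $n+i$ for every $i$, and your sorting check confirms the required reverse alignment, so it is correct.
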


\begin{proof}
We first prove the only-if direction in the special case that $k\ge n$
and every letter in $s$ appears exactly twice.

It was shown in \cite{henshall2012} that if $s$ is a reverse shuffle square,
then $s$ is an \emph{abelian square}, which is a word where the second
half is a permutation of the first. Thus, $s''=\pi(s')$ for some
permutation $\pi$. Since every letter in $s$ appears exactly twice,
this $\pi$ is unique. We show that it is $123$-avoiding. If not,
there are three indices $i_{1}<i_{2}<i_{3}$ for which $\pi(i_{1})<\pi(i_{2})<\pi(i_{3})$.
Thus, $s'_{i_{1}},s'_{i_{2}},s'_{i_{3}}$ appear in the same relative
order in $s'$ as they do in $s''$. These six letters appear at positions
$i_{1}<i_{2}<i_{3}<n+\pi(i_{1})<n+\pi(i_{2})<n+\pi(i_{3})$ in the
original word $s$.

Since $s$ is a reverse shuffle square, its restriction to
the six positions above is a reverse shuffle square as well, as the three letters there do not appear
elsewhere in $s$. But the restriction to these six positions of $s$
is a word of the form $abcabc$, which cannot be a reverse shuffle
square. This proves the special case.

For the general case, suppose $t\in[k]^{2n}$ is any reverse shuffle
square, which means that there exists a partition $I\sqcup J = [2n]$ such that $t_I = t_J^R$. Define $s\in[n]^{2n}$ so that $s_{I}=1...n$ and $s_{J}=n...1$,
so that $s$ is a reverse shuffle square where every letter appears exactly
twice. By the definition of $I$ and $J$ in terms of $t$, $t$ is a homomorphic image of $s$ (in other words, there is a way to obtain $t$ by replacing the letters in $s$ by the letters in $[k]$). Applying the claim above to $s$, we obtain a $123$-avoiding
permutation $\pi$ such that the second half of $s$ is $\pi$ applied
to the first half. As $t$ is a homomorphic image of $s$, this holds
for $t$ as well with the same $\pi$, which proves the only-if direction.

To prove the if direction, note that a permutation $\pi$ is $123$-avoiding
if and only if it can be partitioned into two decreasing subsequences.
Suppose $s$ satisfies $s''=\pi(s')$ for such a $\pi$, and let $[n]=I_{\pi}\sqcup J_{\pi}$
be a partition of the index set of $\pi$ for which $\pi|_{I_{\pi}}$
and $\pi|_{J_{\pi}}$ are both decreasing. Define $I\coloneqq I_{\pi}\cup(n+\pi(J_{\pi}))$
and $J\coloneqq J_{\pi}\cup(n+\pi(I_{\pi}))$, we see that $I$ and
$J$ partition $[2n]$. Because $\pi$ is decreasing when restricted
to both $I_{\pi}$ and $J_{\pi}$, it follows that the part of $s_{I}$
in $s''$ is the reverse of the part of $s_{J}$ in $s'$, and similarly
the part of $s_{J}$ in $s''$ is the reverse of the part of $s_{I}$
in $s'$. This means that $s_{I}=s_{J}^{R}$, completing the proof
that $s$ is a reverse shuffle square.
\end{proof}
We obtain an upper bound $|\rss_{k}(n)|\le C_{n}k^{n}$
by sending each reverse shuffle square $s$ to an ordered pair $(\pi,s')$
of a $123$-avoiding permutation $\pi$ corresponding to $s$ and
the first half $s'$ of $s$. The full word $s$ can be reconstructed
from this data by taking $s''=\pi(s')$. It remains to understand
the overcounting to get at the second-order term.  

To each $\pi\in\av_{n}(123)$, associate the matching $m(\pi)$ on
$[2n]$ whose edges are $(i,n+\pi(i))$. Recall that we defined $\comp(G)$ to be the number of connected components in a graph $G$, and that for $1\le i \ne j \le n$, the transposition $(ij)$ is the permutation which swaps $i$ and $j$ and fixes every other integer from $1$ to $n$.

\begin{lemma} For $n, k \ge 2$, we have
\[
|\rss_{k}(n)|=C_{n}k^{n}-B_{n}k^{n-1}+O_{n}(k^{n-2}),
\]
where $B_{n}$ is the number of unordered pairs $\pi_{1},\pi_{2}\in\av_{n}(123)$
such that $\pi_1 = \pi_2 \circ (ij)$ for some transposition $(ij)\in \av_n(123)$.
\end{lemma}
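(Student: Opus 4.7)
The plan is to run inclusion-exclusion over the 123-avoiding permutations that witness a given reverse shuffle square, in direct analogy with the proof of \cref{theorem:ss}. For each $\pi\in\av_n(123)$, I define $m(\pi)$ to be the bipartite perfect matching on $[2n]$ with edges $\{\pi(i),\,n+i\}$ for $i\in[n]$; this is precisely the set of forced equalities $s_{n+i}=s_{\pi(i)}$ arising from the identity $s''=\pi(s')$ in the previous lemma. Thus $s\in[k]^{2n}$ is a reverse shuffle square if and only if there exists $\pi\in\av_n(123)$ such that $s$ is constant on each edge of $m(\pi)$, and inclusion-exclusion yields
\[
|\rss_k(n)| \;=\; \sum_{r\ge 1}(-1)^{r-1}\sum_{\{\pi_1,\ldots,\pi_r\}} k^{\comp(m(\pi_1)\cup\cdots\cup m(\pi_r))},
\]
where the inner sum is over unordered $r$-subsets of $\av_n(123)$. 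Since $|\av_n(123)|=C_n$ depends only on $n$, the expansion has $O_n(1)$ terms, so it will be enough to isolate those with $\comp\ge n-1$.

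The $r=1$ sum immediately gives the leading $C_n k^n$. For $r=2$, because each $m(\pi)$ is bipartite between $[n]$ and $\{n+1,\ldots,2n\}$, the symmetric difference $m(\pi_1)\triangle m(\pi_2)$ of two distinct matchings decomposes into alternating even cycles of length $\ge 4$. A short count then gives $\comp(m(\pi_1)\cup m(\pi_2)) = n - c + t$, where $c\ge 2$ is the number of non-shared edges in each matching and $t$ is the number of cycles; the inequality $t\le c/2$ forces $\comp\le n-1$, with equality if and only if $m(\pi_1)\triangle m(\pi_2)$ is a single $4$-cycle. By definition of $B_n$, these pairs contribute exactly $-B_n k^{n-1}$ to the sum.

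The main step, which I expect to be the principal obstacle, is to show that no $r\ge 3$ term contributes at order $k^{n-1}$; equivalently, for any three distinct $\pi_1,\pi_2,\pi_3\in\av_n(123)$ one has $\comp(m(\pi_1)\cup m(\pi_2)\cup m(\pi_3))\le n-2$. Suppose otherwise. Then $m(\pi_1)\cup m(\pi_2)$ already has $n-1$ components and consists of $n-2$ shared edges together with a single $4$-cycle on some $\{a,c\}\cup\{b,d\}$ with $a,c\in[n]$ and $b,d\in\{n+1,\ldots,2n\}$. For $m(\pi_3)$ not to merge further components, each of its edges must lie inside an existing component; in particular $m(\pi_3)$ must contain every shared edge of $m(\pi_1)\cap m(\pi_2)$ and must then pair $\{a,c\}$ with $\{b,d\}$ by a bipartite matching. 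But only two such matchings exist, namely $m(\pi_1)$ and $m(\pi_2)$, contradicting the distinctness of $\pi_3$. Consequently every $r\ge 3$ term and every $r=2$ term with $\comp\le n-2$ contributes $O_n(k^{n-2})$, and the claimed asymptotic follows.
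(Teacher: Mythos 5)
Your proof is correct and follows the same overall inclusion-exclusion strategy as the paper: expand $|\rss_k(n)|$ over the matchings $m(\pi)$, $\pi\in\av_n(123)$, and isolate the terms with $\comp\ge n-1$. The local arguments differ slightly and are, if anything, a bit cleaner. For $r=2$, you use the symmetric-difference cycle decomposition of two perfect matchings to show $\comp(m(\pi_1)\cup m(\pi_2))=n-c+t$ with $t\le c/2$ and $c\ge 2$, pinning down the single-$4$-cycle case; the paper reasons more informally here. For $r\ge 3$, you exploit the fact that each $m(\pi)$ is a bipartite matching between $[n]$ and $\{n+1,\ldots,2n\}$, so only two of the three perfect matchings on the size-$4$ component are realizable and $m(\pi_3)$ must coincide with $m(\pi_1)$ or $m(\pi_2)$; the paper instead introduces and uses the ``precedence-free'' property of the $m(\pi)$ (no two edges $(i_1,j_1),(i_2,j_2)$ with $i_1<j_1<i_2<j_2$), which is implied by bipartiteness in this context. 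Both routes are valid; your bipartiteness argument is more direct for this particular lemma, though precedence-freeness is the notion that transfers more naturally to the non-nesting matchings in the shuffle-square case.
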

\begin{proof}

We define $S_{\pi}$ to be
the set of $k^{n}$ words of the form $s=s'\pi(s')$ in $[k]^{2n}$.
We obtain that $s\in S_{\pi}$ exactly if $s_{i}=s_{j}$ whenever
$i\sim j$ in $m(\pi)$. As a result, for multiple permutations $\pi_{1},\cdots,\pi_{r}$,
the intersection $S_{\pi_{1}}\cap\cdots\cap S_{\pi_{r}}$ is exactly
the set of words $s\in[k]^{2n}$ which are constant on every connected
component of $m(\pi_{1})\cup m(\pi_{2})\cup\cdots\cup m(\pi_{r})$.
By inclusion-exclusion, we obtain
\[
|\rss_{k}(n)|=\sum_{\pi}k^{n}-\sum_{\pi_{1},\pi_{2}}k^{\comp(m(\pi_{1})\cup m(\pi_{2}))}+\cdots+(-1)^{r+1}\sum_{\pi_{1},\ldots,\pi_{r}}k^{\comp(m(\pi_{1})\cup\cdots\cup m(\pi_{r}))}+\cdots,
\]
where the $r$-th sum is a sum over unordered $r$-tuples of distinct
$\pi_{i}\in\av_{n}(123)$. We will show that all terms of the form $k^{n}$ appear
in the first sum, and that all terms of the form $k^{n-1}$ appear in the second. Observe that for all $\pi\in\av_n(123)$, $m(\pi)$ is \textit{precedence-free}
(doesn't include two edges $(i_{1},j_{1}),(i_{2},j_{2})$ with $i_{1}<j_{1}<i_{2}<j_{2}$).

We observe that all terms of the form $k^n$ appear in the first sum. Indeed, in any graph $m(\pi_1) \cup \cdots \cup m(\pi_r)$ with $r \geq 2$, there exist edges $(a, b) \in m(\pi_1)$ and $(a, d) \in m(\pi_r)$ with $b \neq d$, so $\comp(m(\pi_1) \cup \cdots \cup m(\pi_r)) \leq n-1$. 

Next, we claim that all terms of the form $k^{n-1}$ appear in the second sum. In other words, we claim that if $r\ge 3$, the graph $m(\pi_1) \cup \cdots \cup m(\pi_r)$ has at most $n-2$ connected components. If $\comp(m(\pi_1) \cup m(\pi_2)) \leq n-2$, then we are done. Otherwise, suppose $\comp(m(\pi_1) \cup m(\pi_2)) = n-1$, so that there is a unique component of size $4$ on vertices $a<b<c<d$. In order for $m(\pi_3)$ to not be identical to $m(\pi_1)$ and $m(\pi_2)$ and also for $\comp(m(\pi_1) \cup m(\pi_2)\cup m(\pi_3)) = n-1$, each of the three perfect matchings in $\{a,b,c,d\}$ must appear in one of $m(\pi_1),m(\pi_2),$ and $m(\pi_3)$. But then one of these matchings contains the edges $(a,b)$ and $(c,d)$, and cannot be precedence-free. This proves the claimed formula with $B_n$ counting the number of unordered pairs of $\pi_1,\pi_2 \in \av_n(123)$ with $\comp(m(\pi_{1})\cup m(\pi_{2}))=n-1$.

The only way for
$\comp(m(\pi_{1})\cup m(\pi_{2}))=n-1$ to occur is if $\pi_{1}$
and $\pi_{2}$ differ by exactly one transposition (i.e. $\pi_{1}=\pi_{2}\circ (ij)$
in cycle notation for some $i,j\in[n]$), so that $m(\pi_{1})\cup m(\pi_{2})$
has exactly one component of size $4$. This completes the proof.
\end{proof}

Thus, to prove \cref{theorem:rss}, it remains to show 
\begin{equation}
B_{n}=2\binom{2n-2}{n-2}+2C_{n+1}-8C_{n}+5C_{n-1}\label{eq:bn-closed-form}
\end{equation}
for $n\ge2$. This we do below.

\subsection{A Closed Form for $B_n$}
In this section, we prove the following formula for $B_{n}$, which
is defined for $n\ge1$ as the number of unordered pairs of elements
of $\av_{n}(123)$ which differ by a single transposition, which is
almost all the way towards (\ref{eq:bn-closed-form}).
\begin{lemma}
\label{lem:pairs-closed-form}For all $n\ge2$,
\[
B_{n}=2\binom{2n-2}{n-2}+2C_{n+1}-8C_{n}+5C_{n-1}.
\]
\end{lemma}
This lemma would complete the proof of (\ref{eq:bn-closed-form}) and thus \cref{theorem:rss}. \cref{lem:pairs-closed-form} will follow from another application of inclusion-exclusion,
which depends on the following diagrams.

\vspace{-0.3cm}

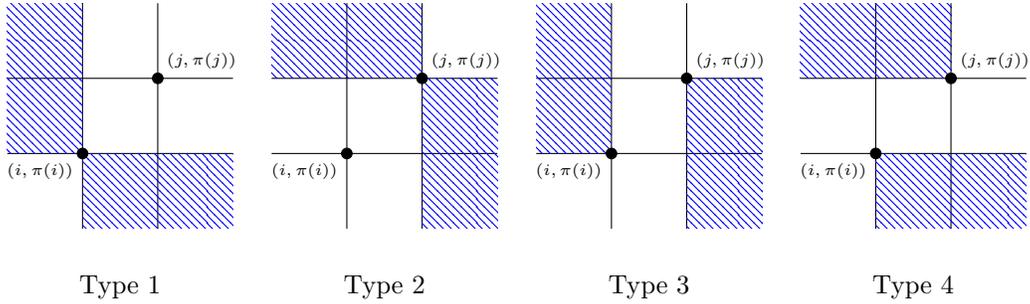
\begin{figure}[H]
    \centering
    \caption{Allowed regions in each type of permutation.}
    \medskip
    \begin{tikzpicture}
\draw (0,0) -- (3,0);
\draw (0,1) -- (3,1);
\draw (1,2) -- (1,-1);
\draw (2,2) -- (2,-1);

\fill[pattern=north west lines, pattern color=blue] (0,0) rectangle (1,1);
\fill[pattern=north west lines, pattern color=blue] (0,1) rectangle (1,2);
\fill[pattern=north west lines, pattern color=blue] (1,-1) rectangle (2,0);
\fill[pattern=north west lines, pattern color=blue] (2,-1) rectangle (3,0);

\filldraw[black] (1,0) circle (2pt);
\filldraw[black] (2,1) circle (2pt);

\node[anchor=north east] at (1,0){\tiny $(i, \pi(i))$};
\node[anchor=south west] at (2,1){\tiny $(j, \pi(j))$};

\node[anchor=north] at (1.5,-1.5){Type 1};
\end{tikzpicture}
    \begin{tikzpicture}
\draw (0,0) -- (3,0);
\draw (0,1) -- (3,1);
\draw (1,2) -- (1,-1);
\draw (2,2) -- (2,-1);

\fill[pattern=north west lines, pattern color=blue] (1,1) rectangle (2,2);
\fill[pattern=north west lines, pattern color=blue] (0,1) rectangle (1,2);
\fill[pattern=north west lines, pattern color=blue] (2,0) rectangle (3,1);
\fill[pattern=north west lines, pattern color=blue] (2,-1) rectangle (3,0);

\filldraw[black] (1,0) circle (2pt);
\filldraw[black] (2,1) circle (2pt);

\node[anchor=north east] at (1,0){\tiny $(i, \pi(i))$};
\node[anchor=south west] at (2,1){\tiny $(j, \pi(j))$};

\node[anchor=north] at (1.5,-1.5){Type 2};
\end{tikzpicture}
    \begin{tikzpicture}
\draw (0,0) -- (3,0);
\draw (0,1) -- (3,1);
\draw (1,2) -- (1,-1);
\draw (2,2) -- (2,-1);

\fill[pattern=north west lines, pattern color=blue] (0,0) rectangle (1,1);
\fill[pattern=north west lines, pattern color=blue] (0,1) rectangle (1,2);
\fill[pattern=north west lines, pattern color=blue] (2,0) rectangle (3,1);
\fill[pattern=north west lines, pattern color=blue] (2,-1) rectangle (3,0);

\filldraw[black] (1,0) circle (2pt);
\filldraw[black] (2,1) circle (2pt);

\node[anchor=north east] at (1,0){\tiny $(i, \pi(i))$};
\node[anchor=south west] at (2,1){\tiny $(j, \pi(j))$};

\node[anchor=north] at (1.5,-1.5){Type 3};
\end{tikzpicture}
    \begin{tikzpicture}
\draw (0,0) -- (3,0);
\draw (0,1) -- (3,1);
\draw (1,2) -- (1,-1);
\draw (2,2) -- (2,-1);

\fill[pattern=north west lines, pattern color=blue] (1,1) rectangle (2,2);
\fill[pattern=north west lines, pattern color=blue] (0,1) rectangle (1,2);
\fill[pattern=north west lines, pattern color=blue] (1,-1) rectangle (2,0);
\fill[pattern=north west lines, pattern color=blue] (2,-1) rectangle (3,0);

\filldraw[black] (1,0) circle (2pt);
\filldraw[black] (2,1) circle (2pt);

\node[anchor=north east] at (1,0){\tiny $(i, \pi(i))$};
\node[anchor=south west] at (2,1){\tiny $(j, \pi(j))$};

\node[anchor=north] at (1.5,-1.5){Type 4};
\end{tikzpicture}
    \label{types}
\end{figure}

Recall that every permutation $\pi$ can be represented in the plane
by plotting all the points $(i,\pi(i))$, and $\pi$ is $123$-avoiding
if and only if the plot doesn't contain three points in increasing
order. Suppose $\pi\in\av_{n}(123)$ and there is a transposition
$(ij)$ for which $\pi\circ(ij)\in\av_{n}(123)$ as well. By swapping
$\pi$ with $\pi\circ(ij)$ if necessary, we may assume $\pi(i)<\pi(j)$
as in the diagram. Then, the four vertical and horizontal lines through
the two points $(i,\pi(i))$ and $(j,\pi(j))$ divide the plane into
nine rectangular sectors, as shown. We say that the pair $(\pi,(ij))$
is of \emph{type t} (for $t\in[4]$) if all the remaining points in
the plot of $\pi$ fall into only the shaded regions in the picture
labelled ``Type $t$.'' For example, $(\pi,(ij))$ is of type $1$
if and only if for all $i'\not\in\{i,j\}$, either $i'<i$ and $\pi(i')> \pi(i)$,
or $i'> i$ and $\pi(i')< \pi(i)$. Note that it's possible for a pair to
be of more than one type.

\begin{figure}[H]
    \centering
    \begin{tikzpicture}
\draw (0,0) -- (3,0);
\draw (0,1) -- (3,1);
\draw (1,2) -- (1,-1);
\draw (2,2) -- (2,-1);

\node at (0.5,1.5){$s_{0,2}$};
\node at (1.5,1.5){$s_{1,2}$};
\node at (2.5,1.5){$s_{2,2}$};

\node at (0.5,0.5){$s_{0,1}$};
\node at (1.5,0.5){$s_{1,1}$};
\node at (2.5,0.5){$s_{2,1}$};

\node at (0.5,-0.5){$s_{0,0}$};
\node at (1.5,-0.5){$s_{1,0}$};
\node at (2.5,-0.5){$s_{2,0}$};
\end{tikzpicture}
    \caption{The axis-parallel lines through $(i, \pi(i))$ and $(j, \pi(j))$ partition the plot of $\pi$ into $9$ cells.}
    \label{fig:sectors}
\end{figure}

\begin{lemma}
If $\pi\in\av_{n}(123)$, $1\le i<j\le n$, and $\pi(i)<\pi(j)$,
and $\pi\circ(ij)\in\av_{n}(123)$, then $(\pi,(ij))$ belongs to (at least)
one of the four types.
\end{lemma}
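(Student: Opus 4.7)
The plan is to determine exactly which of the nine sectors $s_{a,b}$ shown in Figure~\ref{fig:sectors} can contain a point of the plot of $\pi$, and then to verify that every admissible pattern of non-empty sectors lies inside the shaded region of one of the four types. The first step uses only $\pi\in\av_n(123)$ to eliminate the three ``diagonal'' sectors $s_{0,0}, s_{1,1}, s_{2,2}$: a point $(k,\pi(k))$ in any of these would form an ascending triple together with $(i,\pi(i))$ and $(j,\pi(j))$, contradicting 123-avoidance. So the only possibly non-empty sectors are the six ``anti-diagonal'' ones $s_{0,1}, s_{0,2}, s_{1,0}, s_{1,2}, s_{2,0}, s_{2,1}$; of these, $s_{0,2}$ and $s_{2,0}$ lie in every one of the four types and need no further analysis.

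The second step uses $\sigma := \pi\circ(ij) \in \av_n(123)$. Since $\sigma$ differs from $\pi$ only in that $\sigma(i)=\pi(j)$ and $\sigma(j)=\pi(i)$, one can rule out two further configurations. If points $(k,\pi(k))\in s_{0,1}$ and $(k',\pi(k'))\in s_{1,2}$ both existed, then in $\sigma$ the triple at positions $k<i<k'$ would take values $\pi(k)<\pi(j)<\pi(k')$, an ascending triple. Symmetrically, a point in $s_{1,0}$ together with a point in $s_{2,1}$ would combine with the new point $(j,\sigma(j))=(j,\pi(i))$ to form a 123 pattern in $\sigma$. Hence at most one of $\{s_{0,1}, s_{1,2}\}$ and at most one of $\{s_{1,0}, s_{2,1}\}$ can be non-empty.

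The final step is a short case analysis over the four resulting combinations: excluding $s_{1,2},s_{2,1}$ yields Type 1, excluding $s_{0,1},s_{1,0}$ yields Type 2, excluding $s_{1,2},s_{1,0}$ yields Type 3, and excluding $s_{0,1},s_{2,1}$ yields Type 4. In each case the set of non-empty sectors sits inside the shaded region of the corresponding type, completing the proof. The main subtlety is bookkeeping in the second step: one must keep careful track of which value occupies which position after the transposition, and recognize that the constraints supplied by $\pi$ and $\sigma$ being 123-avoiding are genuinely complementary rather than redundant.
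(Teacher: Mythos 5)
Your proof is correct and follows essentially the same route as the paper's: eliminate the three diagonal sectors using $\pi\in\av_n(123)$, then use $\pi\circ(ij)\in\av_n(123)$ together with the relocated points $(i,\pi(j))$ and $(j,\pi(i))$ to show at most one of $\{s_{0,1},s_{1,2}\}$ and at most one of $\{s_{1,0},s_{2,1}\}$ can be occupied. Your explicit matching of the four exclusion patterns to the four types is a nice clarification that the paper leaves implicit (and your phrasing of the $s_{1,0}$/$s_{2,1}$ constraint is cleaner than the paper's slightly garbled sentence there), but it is not a different argument.
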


\begin{proof}
Label the nine sectors as $s_{x,y}$ as in Figure \ref{fig:sectors}, so that $x=0$ if the sector is left of $i$, $x=1$ if it
is between $i$ and $j$, and $x=2$ if it is to the right of $y$,
and similarly for $y$. Since $\pi\in\av_{n}(123)$, $s_{0,0},s_{1,1}$
and $s_{2,2}$ must be empty, since any point in any of them would
form a $123$-pattern with $\pi(i)$ and $\pi(j)$. Thus these three
sectors are always empty, as in the diagram.

Next, note that $s_{0,1}$ and $s_{1,2}$ cannot both be nonempty,
since a point in $s_{0,1}$ and a point in $s_{1,2}$ would form a
$123$-pattern with $(i,\pi(j))$ in $\pi\circ(ij)$. Similarly, at
least one of $s_{1,0}$ and $s_{2,1}$ may be nonempty if $(j,\pi(i))$
appears in the diagram for $\pi\circ(ij)$. This completes the proof.
\end{proof}

Let $P_{n,t}$ denote the collection of pairs $(\pi,(ij))$ of $\pi\in\av_{n}(123)$
and $1\le i<j\le n$ for which $1\le i<j\le n$ of type $t$ for $t=1,2,3,4$.
Clearly, $\cup_{t=1}^{4}P_{n,t}$ is in bijection with the set of
pairs $\{\pi_{1},\pi_{2}\}\in\binom{\av_{n}(123)}{2}$ differing by
a transposition, so it suffices to enumerate this union. We proceed
by inclusion-exclusion.
\begin{lemma}
\label{lem:p-intersections}For $n\ge2$, collections $P_{n,t}$ satisfy
\begin{align}
|P_{n,1}|=|P_{n,2}| & =C_{n+1}-2C_{n}, \label{eq:pn1} \\
|P_{n,3}|=|P_{n,4}| & =\binom{2n-2}{n-2}, \label{eq:pn2}\\
|P_{n,1}\cap P_{n,2}|=|P_{n,3}\cap P_{n,4}| & =C_{n-1},\label{eq:pn3}\\
|P_{n,1}\cap P_{n,3}|=|P_{n,1}\cap P_{n,4}|=|P_{n,2}\cap P_{n,3}|=|P_{n,2}\cap P_{n,4}| & =C_{n}-C_{n-1},\label{eq:pn4}\\
|P_{n,1}\cap P_{n,2}\cap P_{n,3}| 
=|P_{n,1}\cap P_{n,2}\cap P_{n,4}| \nonumber \\
=|P_{n,1}\cap P_{n,3}\cap P_{n,4}|
=|P_{n,2}\cap P_{n,3}\cap P_{n,4}| 
& =C_{n-1},\label{eq:pn5}\\
|P_{n,1}\cap P_{n,2}\cap P_{n,3}\cap P_{n,4}| & =C_{n-1}.\label{eq:pn6}
\end{align}
\end{lemma}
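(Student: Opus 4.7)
The plan is to establish the eleven identities in \cref{lem:p-intersections} by combining two involutive symmetries of $\av_n(123)$ with explicit structural characterizations of each type. The \emph{inverse} map $\pi\mapsto\pi^{-1}$ reflects the plot across the diagonal, inducing $s_{a,b}\leftrightarrow s_{b,a}$; the \emph{reverse-complement} map $\pi(k)\mapsto n+1-\pi(n+1-k)$ rotates the plot by $180^{\circ}$, inducing $s_{a,b}\leftrightarrow s_{2-a,2-b}$. A direct inspection of the forbidden sectors shows that inversion fixes Types~1 and~2 individually and swaps Types~3 and~4, while reverse-complement swaps Types~1 and~2 and fixes Types~3 and~4 individually. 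Both maps preserve 123-avoidance of $\pi$ and of $\pi\circ(ij)$ together with the ascending-pair condition, so they act on each $P_{n,t}$ and its intersections. This immediately yields $|P_{n,1}|=|P_{n,2}|$ and $|P_{n,3}|=|P_{n,4}|$, the equality of the four cross intersections in (\ref{eq:pn4}), and the equality of the four triple intersections in (\ref{eq:pn5}), reducing the eleven quantities to a small number of essentially distinct computations.

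Next I would extract the rigid structure imposed by each type. For Type~1, the emptiness of $s_{1,2}$ and $s_{2,1}$ forces every value greater than $\pi(i)$ other than $\pi(j)$ to occupy a position strictly less than $i$; counting positions versus values gives $\pi(i)=n-i$, and by reverse-complement symmetry Type~2 forces the analogous constraint $\pi(j)=n+2-j$. For Type~3, the emptiness of the entire middle column $s_{1,0}\cup s_{1,1}\cup s_{1,2}$ forces $j=i+1$, identifying Type~3 pairs with marked ascents $(\pi,(i,i+1))$ of $\pi\in\av_n(123)$ satisfying $\pi\circ(i,i+1)\in\av_n(123)$. Intersections sharpen these constraints further: $P_{n,1}\cap P_{n,2}$ simultaneously forces $j=i+1$, $\pi(i)=n-i$, and $\pi(j)=\pi(i)+1$, and the three- and four-way intersections force all four of these conditions as well. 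In each such rigid case, $\pi$ decomposes into two independent $123$-avoiding permutations on the positions left of $i$ and right of $i+1$, and the count reduces via the Catalan recursion to $\sum_k C_k C_{n-2-k}=C_{n-1}$, matching (\ref{eq:pn3}), (\ref{eq:pn5}), and (\ref{eq:pn6}). The cross intersection $|P_{n,1}\cap P_{n,3}|$ follows by specializing the Type~1 characterization to $j=i+1$ and subtracting off the over-constrained subcase $\pi(j)=\pi(i)+1$ to obtain $C_n-C_{n-1}$, while $|P_{n,1}|=C_{n+1}-2C_n$ follows by summing over the position of $j$ given $\pi(i)=n-i$ and applying the Catalan convolution $\sum_{a+b=n}C_aC_b=C_{n+1}$.

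The most delicate identity is $|P_{n,3}|=A_n$, matching Type~3 pairs with the four-parameter Catalan convolution sum in (\ref{eq:an-def}). I would establish this via an explicit bijection between Type~3 pairs $(\pi,(i,i+1))$ and pairs of Dyck paths equipped with an interleaving of their internal touchpoints of the $x$-axis. Under the classical bijection between $123$-avoiding permutations and Dyck paths, the restrictions of $\pi$ to the positions $\{1,\dots,i\}$ and $\{i+1,\dots,n\}$ become two Dyck paths, and the condition $\pi\circ(i,i+1)\in\av_n(123)$ translates into a compatibility between these paths' endpoints; the four parameters $(a,b,c,d)$ should record the numbers of internal touchpoints on each side of the marked ascent, with the factor $\binom{a+c}{a}$ arising from their interleaving (analogously to the argument in the proof of \cref{prop:catconv}). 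This step is the main obstacle, as the correspondence must simultaneously handle the ascent at $(i,i+1)$ in $\pi$ and the descent at $(i,i+1)$ in $\pi\circ(i,i+1)$. Once the bijection is established, four-set inclusion-exclusion over $P_{n,1}\cup P_{n,2}\cup P_{n,3}\cup P_{n,4}$ combined with the counts above yields $B_n=2A_n+2C_{n+1}-8C_n+5C_{n-1}$, completing \cref{lem:pairs-closed-form}.
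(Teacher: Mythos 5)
Your symmetry reductions and the identification of the rigid structure of each type are correct and match the paper's argument: the inverse map $F$ fixes Types~1 and~2 and swaps Types~3 and~4, the $180^\circ$ rotation $R$ swaps Types~1 and~2 and fixes Types~3 and~4, and both preserve $123$-avoidance of $\pi$ and $\pi\circ(ij)$ together with the ascending-pair condition. The bijections behind $|P_{n,1}|=C_{n+1}-2C_n$ and $|P_{n,1}\cap P_{n,2}|=C_{n-1}$ are likewise the same as the paper's. One small caveat on the first paragraph: the group $\langle F,R\rangle$ produces only two orbits among the four triple intersections (it cannot send $P_{n,1}\cap P_{n,2}\cap P_{n,3}$ to $P_{n,1}\cap P_{n,3}\cap P_{n,4}$), so ``the equality of the four triple intersections'' does not follow from symmetry alone. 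It is rescued by your later observation that every triple and quadruple intersection coincides \emph{as a set} with $P_{n,1}\cap P_{n,2}$, since the allowed regions of each are already contained in $\{s_{0,2},s_{2,0}\}$.

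There are two genuine gaps. First, the derivation of $|P_{n,1}\cap P_{n,3}|$ by ``subtracting off the over-constrained subcase $\pi(j)=\pi(i)+1$'' is wrong as written: that subcase is $P_{n,1}\cap P_{n,2}\cap P_{n,3}$, a subset of $P_{n,1}\cap P_{n,3}$ that must be counted, not removed; removing it would give $C_n-2C_{n-1}$, not $C_n-C_{n-1}$. The correct route (as in the paper) is a bijection of $P_{n,1}\cap P_{n,3}$ with pairs in $\av_i(123)\times\av_{n-i-1}(123)$ and the convolution $\sum_{i=1}^{n-1}C_iC_{n-i-1}=C_n-C_{n-1}$; the subtracted $C_{n-1}$ is the $i=0$ boundary term of the full Catalan convolution, not a subcase of the set you are counting. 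Second, and decisively, the key identity $|P_{n,3}|=A_n$ is not established; you flag it yourself as ``the main obstacle'' and only gesture at a Dyck-path decomposition. The paper instead proves $|P_{n,4}|=A_n$ directly, via pairs $(\sigma,\tau)\in\av_{a+b+1}(123)\times\av_{c+d+1}(123)$ with $\sigma(b+1)=1$ and $\tau(c+1)=c+d+1$ placed in the appropriate corners and interleaved in $\binom{a+c}{a}$ ways. Your Type~3 plan may well be salvageable, and can in fact be simplified: for $\pi\in\av_n(123)$ with $\pi(i)<\pi(i+1)$, the permutation $\pi\circ(i,i+1)$ is automatically $123$-avoiding (any $123$-pattern it contained would yield one in $\pi$), so Type~3 pairs are exactly the ascents of $123$-avoiding permutations and your worry about ``simultaneously handling'' the ascent in $\pi$ and descent in $\pi\circ(i,i+1)$ is moot. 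But as written, the hardest identity in the lemma is left unproved.
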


Before we prove the lemma, note that it implies Lemma \ref{lem:pairs-closed-form}
by inclusion-exclusion. Indeed, we have
\begin{align*}
B_{n}&=\Big|\bigcup_{t=1}^{4}P_{n,t}|=[2(C_{n+1}-2C_{n})+2A_{n}]-[2C_{n-1}+4(C_{n}-C_{n-1})]+[4C_{n-1}]-[C_{n-1}] \\
&=2A_{n}+2C_{n+1}-8C_{n}+5C_{n-1}
\end{align*}
by inclusion-exclusion and reading off the values from Lemma \ref{lem:p-intersections}.
\begin{proof}[Proof of Lemma \ref{lem:p-intersections}.]
The system of equations can really be reduced to the four distinct cases arising from (\ref{eq:pn1}), (\ref{eq:pn2}), (\ref{eq:pn3}), and (\ref{eq:pn4}):
$P_{n,1}$, $P_{n,4}$, $P_{n,1}\cap P_{n,2}$ (with only two allowed
regions), and $P_{n,1}\cap P_{n,3}$ (with only three allowed regions). Each of the other cases is equivalent to one of these four via either a $180^\circ$ rotation about the point $\left(\frac{n+1}{2},\frac{n+1}{2}\right)$ (which we denote by $R$) or reflection over the line $y=x$ (which we call $F$).

Now, to see why the latter two transformations preserve $123$-avoiding permutations, consider any $\pi \in \av_n(123)$. Note $R$ maps the point $(i,\pi(i))$ to $(n+1-i,n+1-\pi(i))$. If $R(\pi) \notin \av_n(123)$, then there exist $i>j>k$ such that $n+1-\pi(i)<n+1-\pi(j)<n+1-\pi(k)$. But then $\pi(i)>\pi(j)>\pi(k)$, so $\pi \notin \av_n(123)$, a contradiction.

On the other hand, note that $F(\pi)=\pi^{-1}$, and it is trivial to check that $\sigma^{-1}$ must be $123$-avoiding. This shows that both transformations (a) and (b) preserve $123$-avoiding permutations.

It is easy to see that the sets in (\ref{eq:pn5}) and (\ref{eq:pn6}) define the same regions as $P_{n,1} \cap P_{n,2}$, and that $P_{n,3} \cap P_{n,4}$ is equivalent to $P_{n,1} \cap P_{n,2}$. Moreover, $R$ maps $P_{n,2}$ to $P_{n,1}$ and $P_{n,2} \cap P_{n,3}$ to $P_{n,1} \cap P_{n,3}$. Finally, $F$ maps $P_{n,1} \cap P_{n,4}$ and $P_{n,2} \cap P_{n,4}$ to $P_{n,1} \cap P_{n,3}$. This proves that we only need to consider the four cases outlined above.

We save $P_{n,4}$ to the end, and handle the other three that can immediately be represented in terms of Catalan numbers.\\

\noindent \textbf{Enumeration of $P_{n,1}$}. We start by
proving (\ref{eq:pn1}), which will follow from
\begin{equation}
|P_{n,1}|=\sum_{i=1}^{n-1}C_{i}C_{n-i}.\label{eq:conv1}
\end{equation}
 The proof is by bijection: take two nonempty $123$-avoiding permutations
$\sigma$ and $\tau$ with $|\sigma|+|\tau|=n$. Let $i=|\sigma|$,
and $\pi(i)=n-i=|\tau|$. Given $(\sigma,\tau)$, we obtain $\text{(\ensuremath{\pi,(ij))}}$
of type $1$ as follows.

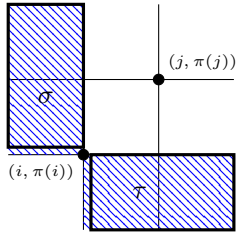
\begin{figure}[H]
    \centering
    \begin{tikzpicture}
\draw (0,0) -- (3,0);
\draw (0,1) -- (3,1);
\draw (1,2) -- (1,-1);
\draw (2,2) -- (2,-1);

\fill[pattern=north west lines, pattern color=blue] (0,0) rectangle (1,2);
\fill[pattern=north west lines, pattern color=blue] (1,-1) rectangle (3,0);

\filldraw[black] (1,0) circle (2pt);
\filldraw[black] (2,1) circle (2pt);

\node[anchor=north east] at (1,0){\tiny $(i, \pi(i))$};
\node[anchor=south west] at (2,1){\tiny $(j, \pi(j))$};

\node at (0.5,0.76){$\mathbf{\sigma}$};
\node at (1.75,-0.5){$\mathbf{\tau}$};

\draw[very thick] (0,0.1) rectangle (1,2);
\draw[very thick] (1.1,-1) rectangle (3,0);
\end{tikzpicture}
    \caption{Constructing permutations of Type $1$ out of two smaller permutations.}
\end{figure}

Place a copy of $\sigma$ in upper left rectangle $[1,i] \times [n-i+1,n]$, and a copy of $\tau$ in the lower-right rectangle $[i+1,n] \times [i+1,n]$, and insert the point $(i,n-i)$. As there are $n+1$ points
in total now, this is not a valid permutation. The offending points
are those in $\sigma$ and $\tau$ which get placed on the horizontal
and vertical lines through $(i,\pi(i))$. Define $(j,\pi(j))$ such
that $j$ is the $x$-coordinate of the offending point in $\tau$,
and $\pi(j)$ is the $y$-coordinate of the offending point in $\sigma$.
Remove the two offending points and insert $(j,\pi(j))$ to obtain
an honest permutation $\pi\in\av_{n}(123)$.

This exhibits a bijection between $P_{n,1}$ and ordered pairs $(\sigma,\tau)$
of nonempty $123$-avoiding permutations whose lengths sum to $n$,
thus proving the convolution formula (\ref{eq:conv1}). This implies
(\ref{eq:pn1}) by the standard convolution identity $C_{n+1}=\sum_{i=0}^{n}C_{i}C_{n-i}$.\\

\noindent \textbf{Enumeration of $P_{n,1} \cap P_{n,2}$}. We now prove the identity $|P_{n,1}\cap P_{n,2}|=C_{n-1}$ using a very similar bijection argument. First form a bijection between $P_{n,1} \cap P_{n,2}$ and ordered pairs of $123$-avoiding permutations $(\sigma,\tau)$ with $\card{\sigma}+\card{\tau}=n-2$. This identity concerns the following regions that are the intersection of Types 1 and 2 in Figure \ref{types}.

\begin{figure}[H]
    \centering
    \begin{tikzpicture}
\draw (0,0) -- (3,0);
\draw (0,1) -- (3,1);
\draw (1,2) -- (1,-1);
\draw (2,2) -- (2,-1);

\fill[pattern=north west lines, pattern color=blue] (0,1) rectangle (1,2);
\fill[pattern=north west lines, pattern color=blue] (2,-1) rectangle (3,0);

\filldraw[black] (1,0) circle (2pt);
\filldraw[black] (2,1) circle (2pt);

\node[anchor=north east] at (1,0){\tiny $(i, \pi(i))$};
\node[anchor=south west] at (2,1){\tiny $(i+1, \pi(i)+1)$};

\node at (0.5,1.5){$\mathbf{\sigma}$};
\node at (2.5,-0.5){$\mathbf{\tau}$};

\draw[very thick] (0,1.1) rectangle (0.9,2);
\draw[very thick] (2.1,-1) rectangle (3,-0.1);
\end{tikzpicture}
    \caption{Constructing permutations that are both Type 1 and Type 2 out of two smaller permutations.}
    \label{fig:p12}
\end{figure}
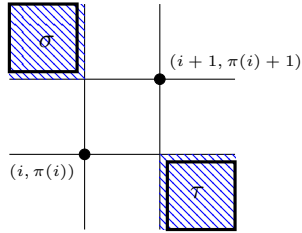

As shown in Figure~\ref{fig:p12}, we want to count the number of nonempty 123-avoiding permutations that have points in only the top left and bottom right regions. Hence, given $(\sigma,\tau)$, we obtain $(\pi,(ij)) \in P_{n,1} \cap P_{n,2}$ by placing a copy of $\sigma$ in the upper-left rectangle $[1,i-1] \times [\pi(i)+2,n]$ of the $n \times n$ grid and a copy of $\tau$ in the lower right rectangle $[n-i+2,n] \times [1,n-i-1]$. Given these regions, we see that $i$ and $j$ are adjacent, as well as $\pi(i)$ and $\pi(j)$, in the permutations we want to count. That is, $j=i+1$ and $\pi(j)=\pi(i)+1$.

Then, the top left region in the diagram can be described as the set of points left of $(i,\pi(i))$ and as the set of points above $(i+1, \pi(i)+1)$. But in any permutation $\pi$, there are $i-1$ points to the left of $i$, and $n - \pi(i)-1$ above $(i+1,\pi(i)+1)$. We see that $i-1 = n - \pi(i)-1$, which determines $\pi(i)=n-i$. Hence, $i-1=\card{\sigma}$ and $n-i-1=\card{\tau}$. Fill in the remainder of the permutation $\pi$ by adding the points $(i,n-i)$ and $(i+1,n-i+1)$.

Since we are free to choose $\sigma \in \av_{i-1}(123)$ and $\tau \in \av_{n-i-1}(123)$, we have $$\card{P_{n,1} \cap P_{n,2}}=\sum_{i=1}^{n-1}C_{i-1}C_{n-i-1}=C_{n-1},$$ as desired, proving (\ref{eq:pn3}).\\

\noindent \textbf{Enumeration of $P_{n,1} \cap P_{n,3}$}. Next, we move on to (\ref{eq:pn4}). Once again, the argument is similar to the one to find $\card{P_{n,1}}$, involving the following diagram illustrating the general form of a permutation $\pi$ belonging to both Types 1 and 3.

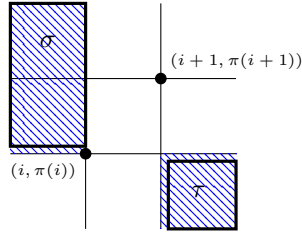
\begin{figure}[H]
    \centering
    \begin{tikzpicture}
\draw (0,0) -- (3,0);
\draw (0,1) -- (3,1);
\draw (1,2) -- (1,-1);
\draw (2,2) -- (2,-1);

\fill[pattern=north west lines, pattern color=blue] (0,0) rectangle (1,2);
\fill[pattern=north west lines, pattern color=blue] (2,-1) rectangle (3,0);

\filldraw[black] (1,0) circle (2pt);
\filldraw[black] (2,1) circle (2pt);

\node[anchor=north east] at (1,0){\tiny $(i, \pi(i))$};
\node[anchor=south west] at (2,1){\tiny $(i+1, \pi(i+1))$};

\node at (0.5,1.5){$\mathbf{\sigma}$};
\node at (2.5,-0.5){$\mathbf{\tau}$};

\draw[very thick] (0,0.1) rectangle (1,2);
\draw[very thick] (2.1,-1) rectangle (3,-0.1);
\end{tikzpicture}
    \caption{Constructing permutations that are both Type 1 and Type 3 out of two smaller permutations.}
\end{figure}

The only points in $\pi$ are located in the rectangles $[1,i] \times [\pi(i), n]$ and $[j,n] \times [1,\pi(i)]$. As there must be no points between $i$ and $j$ horizontally, we have $j=i+1$. Moreover, since there must be $n-i-1$
points below $\pi(i)$ and $i$
points above $\pi(i)$, we have $\pi(i)=n-i$.

This time, we construct a bijection from pairs $(\pi,(ij)) \in P_{n,1} \cap P_{n,3}$ to ordered pairs $(\sigma,\tau)$ with $\sigma \in \av_i(123)$ and $\tau \in \av_{n-i-1}(123)$. Given a pair $(\sigma,\tau)$, we place a copy of $\sigma$ in the upper left rectangle $[1,i] \times [n-i+1, n]$ and a copy of $\tau$ in the lower right rectangle $[i+2,n] \times [1,n-i-1]$, then add the point $(i,n-i)$. To finish, remove the point
$(i,\sigma(i))$
and add the point $(i+1,\sigma(i))$. This creates a valid permutation $\pi$, so $$\card{P_{n,1} \cap P_{n,3}}=\sum_{i=1}^{n-1}C_iC_{n-i-1}=C_n-C_{n-1},$$ proving (\ref{eq:pn3}).\\

\noindent \textbf{Enumeration of $P_{n,4}$}. This leaves
only (\ref{eq:pn2}), which expands as
\[
|P_{n,4}|=\sum_{a+b+c+d=n-2}\binom{a+c}{a}C_{a+b+1,a+1}C_{c+d+1,c+1},
\]
by \cref{prop:catconv}.
This is again a bijection argument, illustrated by the diagram below.

\begin{figure}[H]
    \centering
    \begin{tikzpicture}[scale=0.5]
\draw[color=blue] (0,0) rectangle (4,4);
\filldraw[black] (3,0) circle (2pt);

\draw[dashed] (3,4) -- (3,-4);

\draw[thick, decorate, decoration={calligraphic brace, raise=15pt}]
  (0,4) -- node[above=16pt] {$\sigma$} (5.5,4);

\draw[thick, decorate, decoration={calligraphic brace, mirror, raise=5pt}]
  (0,0) -- node[below=6pt] {$b$} (3,0);
  
\draw[color=blue] (4.5,0) rectangle (4.75,4);
\draw[color=red] (4,-4) rectangle (4.5, -0.25);
\draw[color=red] (4.75,-4) rectangle (5, -0.25);
\draw[color=blue] (5,0) rectangle (5.5,4);

\draw[thick, decorate, decoration={calligraphic brace, mirror, raise=5pt}]
  (4,-4) -- node[below=6pt] {$c$} (5,-4);
  
\draw[thick, decorate, decoration={calligraphic brace, raise=5pt}]
  (3,4) -- node[above=6pt] {$a$} (5.5,4);
  
\draw[color=red] (5.5,-4) rectangle (9.5,-0.25);

\draw[thick, decorate, decoration={calligraphic brace, raise=5pt}]
  (5.5,-0.25) -- node[above=6pt] {$d$} (9.5,-0.25);
  
\draw[thick, decorate, decoration={calligraphic brace, mirror, raise=15pt}]
  (4,-4) -- node[below=16pt] {$\tau$} (9.5,-4);
  
\filldraw[black] (5.5,-0.25) circle (2pt);
\end{tikzpicture}
    \caption{Constructing permutations of Type 4 out of two smaller permutations.}
\end{figure}
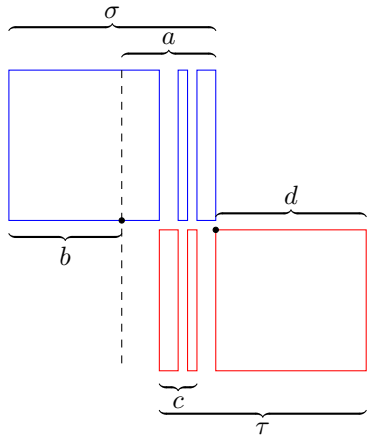

We construct pairs $(\pi, (ij)) \in P_{n,4}$ out of ordered pairs $(\sigma, \tau)\in \av_{a+b+1}(123) \times \av_{c+d+1}(123)$ such that $\sigma(b+1)=1$ and $\tau(c+1)=c+d+1$.  The number of such pairs is exactly $C_{a+b+1,a+1}C_{c+d+1,c+1}$
by \cite{connolly2014}.
Let $i=b+1$ and $j=c+1$. Place $\sigma[1,b+1]$ (that is, $\sigma$ restricted to the first $b+1$ elements) inside the rectangle $[1,i] \times [\pi(i),n]$, and place $\tau[c+1,c+d+1]$ inside the rectangle $[j,n] \times [1,\pi(j)]$. This ensures that the points $(i,\pi(i))$ and $(j,\pi(j))$ are in their correct positions.

Now, notice that both $\sigma[b+2,a+b+1]$ and $\tau[1,c]$ form decreasing sequences, and have lengths $a$ and $c$ respectively. These two parts may be horizontally interleaved arbitrarily in between $i$ and $j$ in $\binom{a+c}{a}$ ways. Thus, we construct a total of $\binom{a+c}{c} C_{a+b+1,a+1}C_{c+d+1,c+1}$ this way. Ranging $a,b,c,d$ over compositions of $n-2$, we obtain all pairs $(\pi,(ij))$ of Type 4 exactly once. This completes the proof.
\end{proof}

This completes the proof of \cref{lem:p-intersections}, which implies \cref{lem:pairs-closed-form} and thus Theorem \ref{theorem:rss}.

\section{The Greedy Algorithm}
\label{sec:greedy}

In this section we define a greedy algorithm for finding twins in binary words, and use it to prove \cref{theorem:bss} and \cref{theorem:greedytwins}.

Given $s \in \{0,1\}^{2n}$, the greedy algorithm outputs a tableau (see \cref{def:tableaux}) $\tg(s) = (I,J)$ of semilength $n$. Here, $I = (i_1,\ldots, i_m)$ and $J = (j_1,\ldots, j_o)$ with $i_1 < \cdots < i_m$ and $j_1 < \dots < j_o$ and $i_k \neq j_r$ for all $k$ and $r$. Writing $i_r$ for the $r$-th term in $I$ and $j_r$ for the $r$-th term in $J$, this tableau has the property that for any $1\le r \le |J|$, $i_r < j_r$ and $s_{i_r} = s_{j_r}$. In particular, if $\tg(s)$ is rectangular, then this tableau exhibits $s$ as a shuffle square. In any case, the subsequence of $s$ indexed by the first $|J|$ elements of $I$ is identical and disjoint from the subsequence indexed by $J$, and so $\delta(s) \le \dg(s)\coloneqq |I| - |J|$. 

We now describe the algorithm explicitly. Initialize $r = 0$ and $U_0 = [2n]$, the set of unused bits. On iteration $1\le r \le n$, let $i_r \coloneqq \min (U_{r-1})$ and 
\begin{equation}\label{eq:greedy-iter}
j_r = \min \{j > j_{r-1}: j \ne i_r \textnormal{ and } s_j = s_{i_r}\},
\end{equation}
where $j_0 \coloneqq 0$. This continues until there is no valid choice for $j_r$, in which case the remaining unused indices in $[2n]$ are placed in sorted order at the end of $I$. In words, each step of the algorithm picks the next unused bit in $s$ for $i_r$ and the first matching copy of this bit to the right of $j_{r-1}$ for $j_r$. The following pseudocode details the decision-making process and return values of this algorithm.
\begin{algorithm}
\renewcommand{\thealgorithm}{}
\caption{The greedy algorithm}\label{greedy pseudocode}
\begin{algorithmic}
\State $r \gets 0$
\State $U_0 \gets [2n]$ \Comment{set of unused bits}
\While{$1 \leq r \leq n$}
\State $i_r \gets \min(U_{r-1})$
\State $j_r \gets \min\{j > j_{r-1}: j\neq i_r \text{ and } s_j = s_{i_r}\}$
\If{$j_r$ \text{ does not exist}}
    \State $I \gets (i_1,\ldots, i_r) \cup \textnormal{sorted}(U_{r-1})$
    \State $J \gets (j_1,\ldots, j_r)$
    \State \Return $(I, J)$ \Comment{algorithm terminates}
\Else
    \State $U_{r} = U_{r-1} \setminus \{i_{r+1}, j_{r+1}\}$
\EndIf
\EndWhile
\State $I \gets (i_1,\ldots, i_n)$
\State $J \gets (j_1,\ldots, j_n)$
\State \Return $(I, J)$
\end{algorithmic}
\end{algorithm}

\noindent If the algorithm succeeds to find $j_r$ for all $n$ iterations, it outputs a rectangular tableau exhibiting $s$ as a shuffle square.

\begin{example}
If $s = 1000110100$, we obtain $\tg(s) = ((1,2,3,4,6,8), (5,7,9,10))$, so that the underlined bits in $ \underline{1000}\overline{1}\underline{1}\overline{0}\underline{1}\overline{00}$ go into $I$ and the overlined bits go into $J$. The bits at positions $i_1$ through $i_4$ match the bits at positions $j_1$ through $j_4$, so this proves that $\delta(s)\le \dg(s) = 2$.
\end{example}

We are ready to prove our main lemma in the analysis of the greedy algorithm, which enumerates the number of words $s\in \{0,1\}^{2n}$ with any given value of $\dg(s)$. Note that since $|I| + |J| = 2n$ and $\dg(s) = |I| - |J|$, $\dg(s)$ is always even.

\begin{lemma}\label{lemma:greedy-deficit}
For $0\le i \le n$, the number of words $s\in \{0, 1\}^{2n}$ for which $\dg(s) =  2i$ is $\binom{2n}{n}$ if $i=0$ and $2\binom{2n}{n+i}$ otherwise.
\end{lemma}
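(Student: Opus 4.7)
The plan is to associate to each $s \in \{0,1\}^{2n}$ a nonnegative lattice path $P(s)$ encoding $\tg(s)$, reduce the count to a sum over such paths, and evaluate that sum by a reflection-style bijection. Define $P(s)$ to be the length-$2n$ path whose $k$th step is an up-step if $k \in I$ and a down-step if $k \in J$, where $(I, J) = \tg(s)$. Then $P(s)$ ends at height $|I| - |J| = \dg(s)$ and stays weakly above $0$ because $i_r < j_r$ holds for all $r \leq |J|$ (the tableau property). The key claim is the fiber formula
\[
\bigl|\{s : P(s) = P\}\bigr| = 2^{L(P)},
\]
where $L(P)$ counts the excursions of $P$: maximal pieces beginning with an up-step from height $0$, including the final unfinished excursion if $P$ does not return to $0$.

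To prove the fiber formula I will parametrize by $J$ (noting $I = [2n]\setminus J$). The requirement $\tg(s) = (I,J)$ is equivalent to the relations $s_{j_r} = s_{i_r}$ together with $s_k \neq s_{i_r}$ for every $k \in (j_{r-1},j_r) \setminus \{i_r\}$, plus the analogous termination condition on $(j_m, 2n+1)$ when $|J| = m < n$. Setting $b_r := s_{i_r}$, these inequalities define a graph $H_J$ on $\{1,\dots,|I|\}$ whose edges correspond to the non-fixed-points of the ``gap map'' $\alpha(r) := i_r - r + 1$ (which records which interval contains $i_r$). Because $(i_r)$ is strictly increasing while $r$ advances by $1$, the map $\alpha$ is weakly increasing, so its functional graph admits no nontrivial cycles; consequently $H_J$ is a forest with exactly $|F_J|$ connected components, where $F_J := \{r : \alpha(r) = r\} = \{r : i_r = 2r-1\}$. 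Inspecting the path, the condition $i_r = 2r-1$ picks out exactly the launching up-steps of the excursions of $P$, giving $|F_J| = L(P)$. Since $H_J$ is a bipartite forest, the number of valid $b$-assignments equals $2^{L(P)}$, establishing the fiber formula.

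Summing the fibers over paths gives
\[
|\{s : \dg(s) = 2i\}| = \sum_{\substack{P \text{ nonneg},\ |P|=2n\\ \text{end height } 2i}} 2^{L(P)},
\]
and this sum I will evaluate by the classical excursion-flipping bijection: pairs $(P, S)$ consisting of a nonnegative length-$2n$ path $P$ ending at height $2i$ together with a subset $S$ of its excursions are in bijection with unrestricted length-$2n$ lattice paths ending at $+2i$ or $-2i$, via reflection across $y=0$ of each excursion in $S$. Thus the sum equals $\binom{2n}{n+i} + \binom{2n}{n-i}$, which simplifies to $\binom{2n}{n}$ when $i = 0$ and to $2\binom{2n}{n+i}$ when $i \geq 1$. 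The main obstacle will be the fiber count — verifying that $H_J$ is a forest with exactly $|F_J|$ components and identifying $|F_J|$ with $L(P)$ — after which the excursion-flipping step is a routine application of the reflection principle.
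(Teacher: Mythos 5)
Your proposal is correct and follows essentially the same route as the paper: both map words to nonnegative UD paths via the greedy tableau, show the fiber over a path $P$ has size $2^{(\text{number of excursions of }P)}$, and then sum by flipping excursions to put these in bijection with unrestricted paths ending at $\pm 2i$. The only presentational difference is that the paper computes the word-fiber size by retracing the greedy algorithm and counting ``pivots,'' whereas you recast the same count as proper $2$-colorings of a constraint forest rooted at the fixed points of the gap map $\alpha$ (and one small slip: for $i=0$ the target heights $+2i$ and $-2i$ coincide, so the sum is $\binom{2n}{n}$ outright, not $\binom{2n}{n+i}+\binom{2n}{n-i}=2\binom{2n}{n}$).
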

\begin{proof}
Let $S_i$ denote the family of words $s\in \{0,1\}^{2n}$ for which $\dg(s)=2i$, and let $P_i$ denote the family of UD paths of semilength $n$ ending at $(2n, \pm 2i)$. Clearly,
\[
|P_i| = 
    \begin{cases} 
    \binom{2n}{n} & \text{ for } i = 0 \\
    2\binom{2n}{n+i} & \text{ for } i > 0,
    \end{cases}
\]
so it suffices to show that $|S_i| = |P_i|$ for all $0\le i \le n$.

Let $A_i$ be the family of UD paths of semilength $n$ never going below the $x$-axis that end at $(2n, 2i)$. Instead of proving a direct bijection between $S_i$ and $P_i$, we construct maps from both $S_i$ and $P_i$ onto $A_i$. We then show that the fibers of these two maps $\phi_S:S_i \rightarrow A_i$ and $\phi_P:P_i \rightarrow A_i$ over any given $p\in A_i$ have the same size $2^{x(p)}$, where $x(p)$ is the number of times $p$ intersects the $x$-axis (including the starting point $(0,0)$ but not the ending point if $p$ ends at $(2n,0)$), and this would complete the proof. We now construct and analyze $\phi_S$ and $\phi_P$ separately.
\vspace{3mm}

\noindent {\bf The map $\phi_S$.} If $s\in S_i$, this implies that the tableau $\tg(s) = (I,J)$ satisfies $|I| = |J| + 2i$. Since $I\sqcup J = [2n]$, we may define a UD path $\phi_S(s)$ from $(I,J)$ by taking an up-step on the indices in $I$, and a down-step on the indices in $J$. Since $(I, J)$ is a standard Young tableau, $\phi_S(s)$ never goes below the $x$ axis, and since $|I| = |J| + 2i$, the resulting path $\phi_S(s)$ ends at $(2n, 2i)$. This proves that $\phi_S$ is a well-defined map from $S_i$ to $A_i$.

The key difficulty is computing the sizes of the fibers of $\phi_S$. Given $p\in A_i$, there is a unique tableau $(I,J)$ where $I$ indexes the up-steps in $p$ and $J$ the down-steps. Our goal is to show that the number of $s \in \{0,1\}^{2n}$ for which $\tg(s) = (I,J)$ is exactly $2^{x(p)}$. We construct such $s$ by retracing the steps of the greedy algorithm.

At each iteration $1\le r\le |J|$, we pick the value of $s_{i_r} = s_{j_r}$ as follows. The value of $j_r$ must satisfy the iteration rule
\[
j_r = \min \{j > j_{r-1}: j \ne i_r \textnormal{ and } s_j = s_{i_r}\}. 
\]
We break into two cases based on whether or not $i_r > j_{r-1}$.

If $i_r = \min(U_{r-1}) > j_{r-1}$, this implies that all the bits before $j_{r-1}$ are used by iteration $r$, so $\{i_1,\ldots, i_{r-1}\} \sqcup \{j_1,\ldots, j_{r-1}\} = [2r-2]$. Call such an $r$ a ``pivot.'' In the original path $p \in A_i$, a pivot $r$ corresponds to a point in the path where there have been exactly $r-1$ up-steps and down-steps up to this point, so pivots are exactly the indices where $p$ leaves the $x$-axis. Whenever we reach a pivot, pick $s_{i_r} \in \{0,1\}$ arbitrarily, and let $s_{j_r} = s_{i_r}$. This accounts for the total number $2^{x(p)}$ of choices, $2$ independent choices for each pivot.

Otherwise, $r$ is not a pivot and $i_r < j_{r-1}$. In this case, let $k\le r-1$ be the unique index for which $j_{k-1} < i_r < j_k$, where we define $j_0 \coloneqq 0$. Since $i_k < i_r$, $j_k$ must be the smallest index after $i_r - 1$ for which $s_{j_k} = s_{i_k}$. In particular, $s_{i_r} \ne s_{j_k}$, so the bit $s_{i_r} = \overline{s_{j_k}}$ is uniquely determined. Note that $\overline{s_{j_k}}$ indicates that the bit $s_{j_k}$ has been flipped, i.e. 1 to 0 and 0 to 1. This operation can also be considered logically as the NOT operation or numerically as adding 1 to the original binary number.

We have now chosen the values of $s_{i_r}$ and $s_{j_r}$ for $r\le |J|$. It remains to choose the values of $s_{i_r}$ for $|J| < r \le |I|$. These $r$ are the remaining indices in $U_r$ after there no longer exists a valid choice for $j$. There are again two cases.

Suppose first that there is some $|J| < r \le |I|$ for which $i_r < j_{|J|}$. For any such $r$, find $k$ for which $j_{k-1} < i_r < j_k$ and define $s_{i_r} = \overline{s_{j_k}}$. For any such $r$, the values of $s_{i_{r'}}$ with $i_{r'} > j_{|J|}$ are uniquely determined to equal $\overline{s_{i_{r}}}$. 

Otherwise, if there is no $|J| < r \le |I|$ for which $i_r < j_{|J|}$, but $|I| > |J|$, then $\{i_1,\ldots, i_{|J|}\} \sqcup \{j_1,\ldots, j_{|J|}\} = [2|J|]$, so $|J|+1$ is a pivot. Pick $s_{i_{|J|+1}} \in \{0,1\}$ arbitrarily and the remaining $s_{i_r}$, $r > |J| + 1$ to be the opposite bit.

It is not difficult to check that at total of $2^{x(p)}$ binary words can arise from the above process, and each of them lies in $\phi_S^{-1}(p)$. This is because there are $x(p)$ pivots and $2$  choices for each pivot. After picking the pivots, the rest of the word can be uniquely constructed from the path $p \in A_i$. This completes the analysis of $\phi_S$.

\vspace{3mm}
\noindent {\bf The map $\phi_P$.} 
Given a UD path $p \in P_i$, divide it into segments above the $x$-axis and segments below the $x$-axis. Define $\phi_P(p)$ to be the UD path obtained by $p$ by reflecting each segment below the $x$-axis across the $x$-axis. The resulting path does not go below the $x$-axis, so it lies in $A_i$. Also, for any $p\in A_i$, the fiber $\phi_P^{-1}(p)$ has $2^{x(p)}$ elements corresponding to $2^{x(p)}$ ways to choose whether or not each of $x(p)$ segments of $p$ is reflected across the $x$-axis. This completes the proof that the fibers of $\phi_S$ and $\phi_P$ have the same size.
\end{proof}

We are now ready to prove \cref{theorem:bss} and \cref{theorem:greedytwins}.

\begin{proof}[Proof of \cref{theorem:bss}.]
By \cref{lemma:greedy-deficit}, the number of $s\in \{0,1\}^{2n}$ for which $\dg(s) = 0$ is $\binom{2n}{n}$. All such $s$ are shuffle squares, so there are at least $\binom{2n}{n}$ binary shuffle squares. For $n\ge 3$, note that the word $s_n = 1^{n-1} 0 1^{n-1} 0 \in \{0,1\}^{2n}$ is a binary shuffle square for which $\dg(s_n) > 0$, proving the theorem.
\end{proof}

\begin{proof}[Proof of \cref{theorem:greedytwins}.]

Since $\delta(s) \le \dg(s)$ for any word $s$, it suffices to show that almost all $s\in \{0,1\}^{2n}$ satisfy $\dg(s) \le h(n)\sqrt{n}$. By \cref{lemma:greedy-deficit}, we know that the number of $s$ for which $\dg(s) = 2i$ is $\binom{2n}{n}$ if $i=0$ and $2\binom{2n}{n+i}$ if $1\le i \le n$. Summing over $i\le a$ for some constant $C>0$, we obtain that for a uniform random $S\in \{0,1\}^{2n}$,
\[
\Pr[\dg(S) > 2a] = \Pr[|\bin(2n,1/2) - n| > a] \le 2 e^{-a^2/(24n)}
\]
by the Chernoff bound (see e.g. \cite[Theorem A.1.1]{alon2016}). Taking $a=\frac{1}{2}h(n)\sqrt{n}$, we see that 
\[
\Pr[\delta(S) > h(n)\sqrt{n}] \le \Pr[\dg(S) > h(n)\sqrt{n}] \le 2e^{-h(n)^2/96} \rightarrow 0,
\] 
as desired.
\end{proof}

\end{document}